\numberwithin{equation}{section}
\numberwithin{figure}{section}
\theoremstyle{plain}
\newtheorem{thm}{\protect\theoremname}[section]
\theoremstyle{plain}
\newtheorem{lem}[thm]{\protect\lemmaname}
\newlist{casenv}{enumerate}{4}
\setlist[casenv]{leftmargin=*,align=left,widest={iiii}}
\setlist[casenv,1]{label={{\itshape\ \casename} \arabic*.},ref=\arabic*}
\setlist[casenv,2]{label={{\itshape\ \casename} \roman*.},ref=\roman*}
\setlist[casenv,3]{label={{\itshape\ \casename\ \alph*.}},ref=\alph*}
\setlist[casenv,4]{label={{\itshape\ \casename} \arabic*.},ref=\arabic*}
\theoremstyle{plain}
\newtheorem{cor}[thm]{\protect\corollaryname}
\newcommand{\id}{\operatorname{id}}
\providecommand{\casename}{Case}
\providecommand{\corollaryname}{Corollary}
\providecommand{\lemmaname}{Lemma}
\providecommand{\theoremname}{Theorem}
\begin{document}
\title{Another view of the coarse invariant $\sigma$}
\author{Takuma Imamura}
\address{Research Institute for Mathematical Sciences\\
Kyoto University\\
Kitashirakawa Oiwake-cho, Sakyo-ku, Kyoto 606-8502, Japan}
\email{timamura@kurims.kyoto-u.ac.jp}
\begin{abstract}
Miller, Stibich and Moore \cite{MSM10} developed a set-valued coarse
invariant $\sigma\left(X,\xi\right)$ of pointed metric spaces. DeLyser,
LaBuz and Tobash \cite{DLT13} provided a different way to construct
$\sigma\left(X,\xi\right)$ (as the set of all sequential ends). This
paper provides yet another definition of $\sigma\left(X,\xi\right)$.
To do this, we introduce a metric on the set $S\left(X,\xi\right)$
of coarse maps $\left(\mathbb{N},0\right)\to\left(X,\xi\right)$,
and prove that $\sigma\left(X,\xi\right)$ is equal to the set of
coarsely connected components of $S\left(X,\xi\right)$. As a by-product,
our reformulation trivialises some known theorems on $\sigma\left(X,\xi\right)$,
including the functoriality and the coarse invariance.
\end{abstract}

\subjclass[2000]{Primary: 40A05; Secondary: 53C23, 54D80, 54E35.}
\maketitle

\section{Introduction}

Miller, Stibich and Moore \cite{MSM10} developed a set-valued coarse
invariant $\sigma\left(X,\xi\right)$ of $\sigma$-stable pointed
metric spaces $\left(X,\xi\right)$. DeLyser, LaBuz and Wetsell \cite{DLW11}
generalised it to pointed metric spaces (without $\sigma$-stability).
The coarse invariance of $\sigma\left(X,\xi\right)$ was proved by
Fox, LaBuz and Laskowsky \cite{FLL11} for $\sigma$-stable spaces,
and by DeLyser, LaBuz and Wetsell \cite{DLW11} for general spaces.

We start with recalling the definition of $\sigma\left(X,\xi\right)$.
We adopt a simplified definition given by DeLyser, LaBuz and Tobash
\cite{DLT13}. Let $\left(X,\xi\right)$ be a pointed metric space.
A \emph{coarse sequence} in $\left(X,\xi\right)$ is a coarse map
$s\colon\left(\mathbb{N},0\right)\to\left(X,\xi\right)$. Denote the
set of coarse sequences in $\left(X,\xi\right)$ by $S\left(X,\xi\right)$.
Given $s,t\in S\left(X,\xi\right)$, we write $s\sqsubseteq_{X,\xi}^{\sigma}t$
if $s$ is a subsequence of $t$. Denote the equivalence closure of
$\sqsubseteq_{X,\xi}^{\sigma}$ by $\equiv_{X,\xi}^{\sigma}$. In
other words, $s\equiv_{X,\xi}^{\sigma}t$ if and only if there exists
a finite sequence $\set{u_{i}}_{i=0}^{n}$ in $S\left(X,\xi\right)$
such that $u_{0}=s$, $u_{n}=t$, and $u_{i}\sqsubseteq_{X,\xi}^{\sigma}u_{i+1}$
or $u_{i+1}\sqsubseteq_{X,\xi}^{\sigma}u_{i}$ for all $i<n$. The
desired invariant is then defined as the quotient set:
\begin{align*}
\sigma\left(X,\xi\right) & :=S\left(X,\xi\right)/\equiv_{X,\xi}^{\sigma}\\
 & :=\set{\left[s\right]_{X,\xi}^{\sigma}|s\in S\left(X,\xi\right)},
\end{align*}
where $\left[s\right]_{X,\xi}^{\sigma}$ is the $\equiv_{X,\xi}^{\sigma}$-equivalence
class of $s$. As noted in \cite{Ima20}, there is no difficulty in
generalising $\sigma\left(X,\xi\right)$ to pointed coarse spaces
$\left(X,\xi\right)$. See \prettyref{subsec:Notation-and-terminology}
for the definitions of the terms used here.

DeLyser, LaBuz and Tobash \cite{DLT13} provided an alternative definition
of $\sigma\left(X,\xi\right)$. Suppose $\left(X,\xi\right)$ is a
pointed metric space. Two coarse sequences $s,t\in S\left(X,\xi\right)$
are said to \emph{converge to the same sequential end} (and denoted
by $s\equiv_{X,\xi}^{e}t$) if there is a $K>0$ such that for all
bounded subsets $B$ of $X$ there is an $N\in\mathbb{N}$ such that
$\set{s\left(i\right)|i\geq N}$ and $\set{t\left(i\right)|i\geq N}$
are contained in the same $K$-chain-connected component of $X\setminus B$.
The $\equiv_{X,\xi}^{e}$-equivalence classes are called \emph{sequential
ends} in $\left(X,\xi\right)$. It was proved that $\equiv_{X,\xi}^{\sigma}$
and $\equiv_{X,\xi}^{e}$ coincides. As a result, $\sigma\left(X,\xi\right)$
is equal to the set of sequential ends in $\left(X,\xi\right)$. This
gives another view of $\sigma\left(X,\xi\right)$.

This paper aims to provide yet another view of $\sigma\left(X,\xi\right)$.
Consider the following diagram:
\[
\xymatrix{\mathsf{Coarse}_{\ast}\ar[rrr]^{\sigma}\ar[dr]_{S} &  &  & \mathsf{Sets}\\
 & \mathsf{Metr}_{b}\ar[r]_{\text{forget}} & \mathsf{Coarse}_{b}\ar[ur]_{\mathcal{Q}}
}
\]
where $\mathsf{Coarse}_{\ast}$ is the category of pointed coarse
spaces and (base point preserving) coarse maps, $\mathsf{Metr}_{b}$
the category of metric spaces and bornologous maps, $\mathsf{Coarse}_{b}$
the category of coarse spaces and bornologous maps, and $\mathsf{Sets}$
the category of sets and maps. In \prettyref{sec:Coarsely-connected-component},
we introduce the so-called coarsely connected component functor $\mathcal{Q}\colon\mathsf{Coarse}_{b}\to\mathsf{Sets}$.
The coarse invariance of $\mathcal{Q}$ is proved. In \prettyref{sec:Metrisation-of-S},
we introduce a metric on the set $S\left(X,\xi\right)$, where the
metric is allowed to take the value $\infty$. This forms a functor
$S\colon\mathsf{Coarse}_{\ast}\to\mathsf{Metr}_{b}$. We prove the
preservation of bornotopy by $S$. In \prettyref{sec:Alternative-definition-of-sigma},
we prove that $\sigma$ can be considered as the composition of the
two functors $\mathcal{Q}$ and $S$, which commutes the above diagram.
As a by-product, our reformulation trivialises some known theorems
on $\sigma\left(X,\xi\right)$, including the functoriality and the
coarse invariance.

\subsection{\label{subsec:Notation-and-terminology}Notation and terminology}

Let $f,g\colon X\to Y$ be maps, $E,F$ binary relations on $X$ (i.e.
subsets of $X\times X$), and $n\in\mathbb{N}$. Then
\begin{align*}
E\circ F & :=\set{\left(x,y\right)\in X\times X|\left(x,z\right)\in E\text{ and }\left(z,y\right)\in F\text{ for some }z\in X},\\
E^{-1} & :=\set{\left(y,x\right)\in X\times X|\left(x,y\right)\in E},\\
E^{0} & :=\Delta_{X}:=\set{\left(x,x\right)|x\in X},\\
E^{n+1} & :=E^{n}\circ E,\\
\left(f\times g\right)\left(E\right) & :=\set{\left(f\left(x\right),g\left(y\right)\right)|\left(x,y\right)\in E}.
\end{align*}

A \emph{coarse structure} on a set $X$ is a family $\mathcal{C}_{X}$
of binary relations on $X$ with the following properties:
\begin{enumerate}
\item $\Delta_{X}\in\mathcal{C}_{X}$;
\item $E\subseteq F\in\mathcal{C}_{X}\implies E\in\mathcal{C}_{X}$; and
\item $E,F\in\mathcal{C}_{X}\implies E\cup F,E\circ F,E^{-1}\in\mathcal{C}_{X}$.
\end{enumerate}
A set equipped with a coarse structure is called a \emph{coarse space}.
A subset $A$ of $X$ is called a \emph{bounded set} if $A\times A\in\mathcal{C}_{X}$.
We denote the family of bounded subsets of $X$ by $\mathcal{B}_{X}$.
This family satisfies the following:
\begin{enumerate}
\item $\bigcup\mathcal{B}_{X}=X$;
\item $A\subseteq B\in\mathcal{B}_{X}\implies A\in\mathcal{B}_{X}$;
\item $A,B\in\mathcal{B}_{X},A\cap B\neq\varnothing\implies A\cup B\in\mathcal{B}_{X}$.
\end{enumerate}
A typical example of a coarse structure is the \emph{bounded coarse
structure} induced by a metric $d_{X}\colon X\times X\to\mathbb{R}_{\geq0}\cup\set{\infty}$:
\[
\mathcal{C}_{d_{X}}:=\set{E\subseteq X\times X|\sup d_{X}\left(E\right)<\infty}\cup\set{\varnothing}.
\]
Then the boundedness defined above agrees with the usual boundedness.
Throughout this paper, we assume that every metric space is endowed
with the bounded coarse structure.

Let $f,g\colon X\to Y$ be maps from a set $X$ to a coarse space
$Y$. We say that $f$ and $g$ are \emph{bornotopic} (or \emph{close})
if $\left(f\times g\right)\left(\Delta_{X}\right)\in\mathcal{C}_{Y}$.
Obviously bornotopy gives an equivalence relation on the set $Y^{X}$
of all maps from $X$ to $Y$.

Suppose $f\colon X\to Y$ is a map between coarse spaces $X,Y$. Then
$f$ is said to be
\begin{enumerate}
\item \emph{proper} if $f^{-1}\left(B\right)\in\mathcal{B}_{X}$ for all
$B\in\mathcal{B}_{Y}$;
\item \emph{bornologous} if $\left(f\times f\right)\left(E\right)\in\mathcal{C}_{Y}$
for all $E\in\mathcal{C}_{X}$;
\item \emph{coarse} if it is proper and bornologous;
\item an \emph{asymorphism} (or an \emph{isomorphism of coarse spaces})
if it is a bornologous bijection such that the inverse map is also
bornologous;
\item a \emph{coarse equivalence} (or a \emph{bornotopy equivalence}) if
it is bornologous, and there exists a bornologous map $g\colon Y\to X$
(called a \emph{coarse inverse} or a \emph{bornotopy inverse} of $f$)
such that $g\circ f$ and $f\circ g$ are bornotopic to the identity
maps $\id_{X}$ and $\id_{Y}$, respectively.
\end{enumerate}
For more information, see the monograph \cite{Roe03} by John Roe.

\section{\label{sec:Coarsely-connected-component}Coarsely connected components}

Let $X$ be a coarse space. A subset $A$ of $X$ is said to be \emph{coarsely
connected} if $\set{x,y}\in\mathcal{B}_{X}$ for all $x,y\in A$ (\cite[Definition 2.11]{Roe03}).
For $x\in X$, we set

\[
\mathcal{Q}_{X}\left(x\right):=\bigcup_{x\in B\in\mathcal{B}_{X}}B,
\]
and call it the \emph{coarsely connected component} of $X$ containing
$x$. It is easy to see that $\mathcal{Q}_{X}\left(x\right)$ is the
largest coarsely connected subset of $X$ that contains $x$ (see
also \cite[Remark 2.20]{Roe03}). We denote the set of all coarsely
connected components of $X$ by $\mathcal{Q}\left(X\right)$:
\[
\mathcal{Q}\left(X\right):=\set{\mathcal{Q}_{X}\left(x\right)|x\in X}.
\]

\begin{lem}
Let $f\colon X\to Y$ be a bornologous map. If $X$ is coarsely connected,
then so is the image $f\left(X\right)$.
\end{lem}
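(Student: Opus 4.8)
The plan is to unwind the definition of coarse connectedness for the image and trace each pair of points in $f\left(X\right)$ back to a pair in $X$. By definition, $f\left(X\right)$ is coarsely connected precisely when $\{u,v\}\in\mathcal{B}_{Y}$ for every $u,v\in f\left(X\right)$, equivalently when $\{u,v\}\times\{u,v\}\in\mathcal{C}_{Y}$. So I would fix arbitrary $u,v\in f\left(X\right)$ and aim to produce this membership.

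First I would choose preimages: since $u,v\in f\left(X\right)$, pick $x,y\in X$ with $f\left(x\right)=u$ and $f\left(y\right)=v$. The coarse connectedness of $X$ then gives $\{x,y\}\in\mathcal{B}_{X}$, i.e. $E:=\{x,y\}\times\{x,y\}\in\mathcal{C}_{X}$.

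Next I would push this relation forward through $f$. Because $f$ is bornologous, $\left(f\times f\right)\left(E\right)\in\mathcal{C}_{Y}$. The key bookkeeping step is the set-theoretic identity $\left(f\times f\right)\left(E\right)=\{\left(f\left(a\right),f\left(b\right)\right)\mid a,b\in\{x,y\}\}=\{u,v\}\times\{u,v\}$, which holds in all cases (including the degenerate ones $x=y$ or $f\left(x\right)=f\left(y\right)$, where the right-hand side collapses to a singleton product). Hence $\{u,v\}\times\{u,v\}\in\mathcal{C}_{Y}$, so $\{u,v\}\in\mathcal{B}_{Y}$.

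Since $u,v$ were arbitrary, $f\left(X\right)$ is coarsely connected. I do not expect any genuine obstacle here: the only point requiring a little care is verifying the displayed identity for $\left(f\times f\right)\left(E\right)$, after which no appeal to the downward closure of $\mathcal{C}_{Y}$ is even needed, because the equality (rather than a mere inclusion) already delivers the conclusion directly.
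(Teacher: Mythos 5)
Your proof is correct and takes essentially the same route as the paper: the paper's one-line proof invokes the fact that bornologous maps preserve boundedness, and your computation $\left(f\times f\right)\left(\set{x,y}\times\set{x,y}\right)=\set{u,v}\times\set{u,v}$ is exactly the verification of that fact for two-point sets. Nothing is missing; you have simply written out in full what the paper leaves implicit.
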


\begin{proof}
Immediate from the fact that every bornologous map preserves boundedness.
\end{proof}
\begin{thm}[Functoriality]
\label{thm:functoriality-of-Q}Every bornologous map $f\colon X\to Y$
functorially induces a map $\mathcal{Q}\left(f\right)\colon\mathcal{Q}\left(X\right)\to\mathcal{Q}\left(Y\right)$
by $\mathcal{Q}\left(f\right)\left(\mathcal{Q}_{X}\left(x\right)\right):=\mathcal{Q}_{Y}\left(f\left(x\right)\right)$.
\end{thm}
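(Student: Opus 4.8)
The plan is to check two things: that the assignment $\mathcal{Q}\left(f\right)$ is independent of the chosen representative, and that it respects identities and composition. Both rest on the single fact, already used in the previous lemma, that a bornologous map preserves boundedness, so I expect the argument to be short.

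First I would record the criterion that, for $x,y\in X$, one has $y\in\mathcal{Q}_{X}\left(x\right)$ if and only if $\set{x,y}\in\mathcal{B}_{X}$. Indeed, if $\set{x,y}$ is bounded then it is a bounded set containing $x$, so $y$ lies in the union defining $\mathcal{Q}_{X}\left(x\right)$; conversely, $\mathcal{Q}_{X}\left(x\right)$ is coarsely connected and contains $x$, so every $y\in\mathcal{Q}_{X}\left(x\right)$ satisfies $\set{x,y}\in\mathcal{B}_{X}$. Since each component is the largest coarsely connected set through its points, this criterion yields the partition property: $\mathcal{Q}_{X}\left(x\right)=\mathcal{Q}_{X}\left(x'\right)$ exactly when $\set{x,x'}\in\mathcal{B}_{X}$.

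For well-definedness, suppose $\mathcal{Q}_{X}\left(x\right)=\mathcal{Q}_{X}\left(x'\right)$, so that $\set{x,x'}\in\mathcal{B}_{X}$, i.e.\ $\set{x,x'}\times\set{x,x'}\in\mathcal{C}_{X}$. Since $f$ is bornologous, $\left(f\times f\right)\left(\set{x,x'}\times\set{x,x'}\right)=\set{f\left(x\right),f\left(x'\right)}\times\set{f\left(x\right),f\left(x'\right)}$ belongs to $\mathcal{C}_{Y}$, so $\set{f\left(x\right),f\left(x'\right)}\in\mathcal{B}_{Y}$. By the criterion above, $\mathcal{Q}_{Y}\left(f\left(x\right)\right)=\mathcal{Q}_{Y}\left(f\left(x'\right)\right)$, and hence the formula $\mathcal{Q}\left(f\right)\left(\mathcal{Q}_{X}\left(x\right)\right):=\mathcal{Q}_{Y}\left(f\left(x\right)\right)$ does not depend on the representative $x$.

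Functoriality is then immediate from the defining formula. For the identity, $\mathcal{Q}\left(\id_{X}\right)\left(\mathcal{Q}_{X}\left(x\right)\right)=\mathcal{Q}_{X}\left(x\right)$, so $\mathcal{Q}\left(\id_{X}\right)=\id_{\mathcal{Q}\left(X\right)}$. For bornologous maps $f\colon X\to Y$ and $g\colon Y\to Z$, $\mathcal{Q}\left(g\circ f\right)\left(\mathcal{Q}_{X}\left(x\right)\right)=\mathcal{Q}_{Z}\left(g\left(f\left(x\right)\right)\right)=\mathcal{Q}\left(g\right)\left(\mathcal{Q}_{Y}\left(f\left(x\right)\right)\right)=\left(\mathcal{Q}\left(g\right)\circ\mathcal{Q}\left(f\right)\right)\left(\mathcal{Q}_{X}\left(x\right)\right)$. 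I anticipate no genuine obstacle; the only step needing care is the well-definedness above, and that is a direct consequence of the preservation of boundedness.
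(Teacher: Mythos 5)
Your proof is correct, and it is organized around a different pivot than the paper's. The paper never isolates your pairwise criterion; instead it feeds the whole component through the preceding lemma: $f(\mathcal{Q}_X(x))$ is coarsely connected (as the bornologous image of a coarsely connected set) and contains $f(x)$ and $f(y)$, and then two appeals to maximality give $\mathcal{Q}_Y(f(x))=\mathcal{Q}_Y(f(y))$. You instead reduce everything to two-point sets, via the characterization $y\in\mathcal{Q}_X(x)\Leftrightarrow\{x,y\}\in\mathcal{B}_X$ and its consequence $\mathcal{Q}_X(x)=\mathcal{Q}_X(x')\Leftrightarrow\{x,x'\}\in\mathcal{B}_X$, after which well-definedness is the one-line computation $(f\times f)(\{x,x'\}\times\{x,x'\})=\{f(x),f(x')\}\times\{f(x),f(x')\}\in\mathcal{C}_Y$. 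Both arguments ultimately rest on the same fact (bornologous maps preserve boundedness) and both use maximality (yours is hidden inside the derivation of the partition property), so the difference is one of decomposition rather than substance. Still, your packaging buys two things: the explicit equality criterion for components, which would also streamline the paper's proof of \prettyref{thm:coarse-invariance-of-Q} (whose argument with the two-point set $\{f(x),g(x)\}$ is in fact closer in spirit to your proof than to the paper's proof of this theorem); and an actual verification of the functor axioms (identity and composition), which the paper omits under ``it suffices to verify the well-definedness.''
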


\begin{proof}
It suffices to verify the well-definedness. Let $x,y\in X$ and suppose
$\mathcal{Q}_{X}\left(x\right)=\mathcal{Q}_{X}\left(y\right)$. Since
$f$ is bornologous and $\mathcal{Q}_{X}\left(x\right)$ is coarsely
connected, $f\left(\mathcal{Q}_{X}\left(x\right)\right)$ is coarsely
connected and contains $f\left(x\right)$. By the maximality of $\mathcal{Q}_{Y}\left(f\left(x\right)\right)$,
we have that $f\left(y\right)\in f\left(\mathcal{Q}_{X}\left(y\right)\right)=f\left(\mathcal{Q}_{X}\left(x\right)\right)\subseteq\mathcal{Q}_{Y}\left(f\left(x\right)\right)$.
By the maximality of $\mathcal{Q}_{Y}\left(f\left(y\right)\right)$,
we have that $\mathcal{Q}_{Y}\left(f\left(x\right)\right)\subseteq\mathcal{Q}_{Y}\left(f\left(y\right)\right)$.
By symmetry, $\mathcal{Q}_{Y}\left(f\left(y\right)\right)\subseteq\mathcal{Q}_{Y}\left(f\left(x\right)\right)$
holds. Therefore $\mathcal{Q}_{Y}\left(f\left(x\right)\right)=\mathcal{Q}_{Y}\left(f\left(y\right)\right)$.
\end{proof}
\begin{thm}[Coarse invariance]
\label{thm:coarse-invariance-of-Q}If bornologous maps $f,g\colon X\to Y$
are bornotopic, then $\mathcal{Q}\left(f\right)=\mathcal{Q}\left(g\right)$.
\end{thm}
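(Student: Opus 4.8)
The plan is to show that $\mathcal{Q}(f)$ and $\mathcal{Q}(g)$ agree on each coarsely connected component, i.e.\ that $\mathcal{Q}_{Y}(f(x)) = \mathcal{Q}_{Y}(g(x))$ for every $x \in X$. Since both maps are defined by evaluating on a representative point, this reduces to a pointwise statement about $f(x)$ and $g(x)$.

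First I would unpack the bornotopy hypothesis. By definition, $f$ and $g$ being bornotopic means $\left(f \times g\right)\left(\Delta_{X}\right) \in \mathcal{C}_{Y}$. Taking the diagonal entry at $x$, this gives $\left(f(x), g(x)\right) \in \mathcal{C}_{Y}$, which in turn says that the two-element set $\set{f(x), g(x)}$ is bounded in $Y$ (a singleton or a pair whose product lies in the coarse structure is a bounded set). Then I would invoke the maximality characterisation of coarsely connected components stated just above Lemma: since $\set{f(x), g(x)} \in \mathcal{B}_{Y}$ is a bounded, hence coarsely connected, set containing both $f(x)$ and $g(x)$, it is contained in the largest coarsely connected set through $f(x)$, namely $\mathcal{Q}_{Y}(f(x))$. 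Thus $g(x) \in \mathcal{Q}_{Y}(f(x))$. By the same maximality argument applied twice (as in the proof of \prettyref{thm:functoriality-of-Q}), $g(x) \in \mathcal{Q}_{Y}(f(x))$ forces $\mathcal{Q}_{Y}(g(x)) = \mathcal{Q}_{Y}(f(x))$.

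Finally, combining with \prettyref{thm:functoriality-of-Q}, for every generator $\mathcal{Q}_{X}(x)$ of $\mathcal{Q}(X)$ we have
\[
\mathcal{Q}(f)\left(\mathcal{Q}_{X}(x)\right) = \mathcal{Q}_{Y}(f(x)) = \mathcal{Q}_{Y}(g(x)) = \mathcal{Q}(g)\left(\mathcal{Q}_{X}(x)\right),
\]
so the two induced maps coincide on all of $\mathcal{Q}(X)$.

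I do not anticipate a serious obstacle here; the statement is essentially a direct consequence of the definitions once the bornotopy condition is translated into the boundedness of $\set{f(x), g(x)}$. The only point requiring minor care is confirming that $\left(f \times g\right)\left(\Delta_{X}\right) \in \mathcal{C}_{Y}$ indeed yields $\set{f(x), g(x)} \times \set{f(x), g(x)} \in \mathcal{C}_{Y}$, which follows from the closure of $\mathcal{C}_{Y}$ under composition, inverse, and union together with $\Delta_{Y} \in \mathcal{C}_{Y}$; this is routine.
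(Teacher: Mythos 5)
Your proposal is correct and follows essentially the same route as the paper's own proof: translate the bornotopy hypothesis into boundedness (hence coarse connectedness) of $\set{f\left(x\right),g\left(x\right)}$, then apply the maximality of $\mathcal{Q}_{Y}\left(f\left(x\right)\right)$ and $\mathcal{Q}_{Y}\left(g\left(x\right)\right)$ symmetrically to conclude $\mathcal{Q}_{Y}\left(f\left(x\right)\right)=\mathcal{Q}_{Y}\left(g\left(x\right)\right)$ pointwise. The only cosmetic difference is your explicit remark on verifying $\set{f\left(x\right),g\left(x\right)}\times\set{f\left(x\right),g\left(x\right)}\in\mathcal{C}_{Y}$ from the closure axioms, which the paper leaves implicit.
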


\begin{proof}
The proof is similar to that of \prettyref{thm:functoriality-of-Q}.
Let $x\in X$. Since $f$ and $g$ are bornotopic, $\left(f\left(x\right),g\left(x\right)\right)\in\left(f\times g\right)\left(\Delta_{X}\right)\in\mathcal{C}_{Y}$,
so $\set{f\left(x\right),g\left(x\right)}$ is bounded in $Y$. Thus
$\set{f\left(x\right),g\left(x\right)}$ is coarsely connected and
contains $f\left(x\right)$. By the maximality of $\mathcal{Q}_{Y}\left(f\left(x\right)\right)$,
we have that $g\left(x\right)\in\set{f\left(x\right),g\left(x\right)}\subseteq\mathcal{Q}_{Y}\left(f\left(x\right)\right)$.
By the maximality of $\mathcal{Q}_{Y}\left(g\left(x\right)\right)$,
we have that $\mathcal{Q}_{Y}\left(f\left(x\right)\right)\subseteq\mathcal{Q}_{Y}\left(g\left(x\right)\right)$.
The reverse inclusion $\mathcal{Q}_{Y}\left(g\left(x\right)\right)\subseteq\mathcal{Q}_{Y}\left(f\left(x\right)\right)$
holds by symmetry. It follows that $\mathcal{Q}\left(f\right)\left(\mathcal{Q}_{X}\left(x\right)\right)=\mathcal{Q}_{Y}\left(f\left(x\right)\right)=\mathcal{Q}_{Y}\left(g\left(x\right)\right)=\mathcal{Q}\left(g\right)\left(\mathcal{Q}_{X}\left(x\right)\right)$.
\end{proof}

\section{\label{sec:Metrisation-of-S}Metrisation of $S\left(X,\xi\right)$}

Let $\left(X,\xi\right)$ be a pointed coarse space. A coarse map
$s\colon\left(\mathbb{N},0\right)\to\left(X,\xi\right)$ is called
a \emph{coarse sequence} in $\left(X,\xi\right)$. Denote by $S\left(X,\xi\right)$
the set of all coarse sequences of $\left(X,\xi\right)$. In the preceding
studies \cite{MSM10,FLL11,DLW11,DLT13}, $S\left(X,\xi\right)$ is
just a set with no structure. In fact, as we shall see below, $S\left(X,\xi\right)$
has a geometric structure relevant to $\sigma\left(X,\xi\right)$.
We define a metric $d_{S\left(X,\xi\right)}\colon S\left(X,\xi\right)\times S\left(X,\xi\right)\to\mathbb{N}\cup\set{\infty}$
on $S\left(X,\xi\right)$ as follows:
\[
d_{S\left(X,\xi\right)}\left(s,t\right):=\inf\Set{n\in\mathbb{N}|\left(s,t\right)\in\left(\sqsubseteq_{X,\xi}^{\sigma}\cup\sqsupseteq_{X,\xi}^{\sigma}\right)^{n}},
\]
where $\inf\varnothing:=\infty$. It is easy to check that $d_{S\left(X,\xi\right)}$
is a metric. Thus $S\left(X,\xi\right)$ is equipped with a coarse
structure, viz., the bounded coarse structure induced by $d_{S\left(X,\xi\right)}$.
\begin{lem}
\label{lem:properties-of-dS}Let $\left(X,\xi\right)$ be a pointed
coarse space and $s,t\in S\left(X,\xi\right)$.
\begin{enumerate}
\item The following are equivalent:
\begin{enumerate}
\item $s\equiv_{X,\xi}^{\sigma}t$;
\item $d_{S\left(X,\xi\right)}\left(s,t\right)\in\mathbb{N}$;
\item there exists a sequence $\set{u_{i}}_{i=0}^{n}$ in $S\left(X,\xi\right)$
of length $n+1$ such that $u_{0}=s$, $u_{n}=t$ and $d_{S\left(X,\xi\right)}\left(u_{i},u_{i+1}\right)=1$
for all $i<n$, where $n$ is an arbitrary constant greater than or
equal to $d_{S\left(X,\xi\right)}\left(s,t\right)$.
\end{enumerate}
\item The following are equivalent:
\begin{enumerate}
\item $s\not\equiv_{X,\xi}^{\sigma}t$;
\item $d_{S\left(X,\xi\right)}\left(s,t\right)=\infty$;
\item there is no finite sequence $\set{u_{i}}_{i=0}^{n}$ in $S\left(X,\xi\right)$
such that $u_{0}=s$, $u_{n}=t$ and $d_{S\left(X,\xi\right)}\left(u_{i},u_{i+1}\right)=1$
for all $i<n$.
\end{enumerate}
\end{enumerate}
\end{lem}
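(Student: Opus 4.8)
The plan is to reduce the whole statement to the combinatorics of the single symmetric, reflexive relation $R:=\sqsubseteq_{X,\xi}^{\sigma}\cup\sqsupseteq_{X,\xi}^{\sigma}$, whose iterated composites govern $d_{S\left(X,\xi\right)}$. First I would record that $R$ is reflexive, since every coarse sequence is a subsequence of itself, so $\Delta_{S\left(X,\xi\right)}\subseteq R$ and hence $R^{n}\subseteq R^{n+1}$ for every $n$; consequently the defining infimum is attained, $d_{S\left(X,\xi\right)}\left(s,t\right)=\min\Set{n\in\mathbb{N}|\left(s,t\right)\in R^{n}}$ whenever that set is non-empty. Unwinding the description of $\equiv_{X,\xi}^{\sigma}$ from the introduction, a chain $u_{0}=s,\dots,u_{n}=t$ with consecutive terms related by $\sqsubseteq_{X,\xi}^{\sigma}$ or $\sqsupseteq_{X,\xi}^{\sigma}$ is exactly a witness of $\left(s,t\right)\in R^{n}$, so $s\equiv_{X,\xi}^{\sigma}t$ holds iff $\left(s,t\right)\in R^{n}$ for some $n$, i.e. $\equiv_{X,\xi}^{\sigma}=\bigcup_{n\geq0}R^{n}$.

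From this, the equivalences (a)$\Leftrightarrow$(b) in both parts are immediate: the set $\Set{n|\left(s,t\right)\in R^{n}}$ is non-empty precisely when $s\equiv_{X,\xi}^{\sigma}t$, and its infimum is a natural number in that case and $\infty$ otherwise. Likewise (c)$\Rightarrow$(a) is routine, since a chain with $d_{S\left(X,\xi\right)}\left(u_{i},u_{i+1}\right)=1$ has every consecutive pair in $R$, whence $\left(s,t\right)\in R^{n}$.

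The substantive direction is (a)/(b)$\Rightarrow$(c), where I must produce, for each prescribed $n\geq m:=d_{S\left(X,\xi\right)}\left(s,t\right)$, a chain of length exactly $n+1$ all of whose steps have distance \emph{exactly} $1$. For $n=m$ I take a shortest chain realising $\left(s,t\right)\in R^{m}$; minimality forces $v_{i}\neq v_{i+1}$ (deleting a repeated vertex would shorten it), so each step automatically has distance $1$. To reach larger $n$ I would establish a padding construction resting on two facts about the subsequence order: (i) every coarse sequence $u$ has a \emph{strict} supersequence $u^{+}\sqsupsetneq u$, obtained e.g. by repeating one entry, which is again proper and bornologous and differs from $u$ because a coarse sequence into $\mathbb{N}$ has finite fibres and so is never eventually constant; and (ii) $\sqsubseteq_{X,\xi}^{\sigma}$ is antisymmetric, again by the finite-fibre property. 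Given a chain with first edge $\set{u_{0},u_{1}}$, I let $b$ be the $\sqsubseteq_{X,\xi}^{\sigma}$-larger of the two and insert $b^{+}$ between them; since $u_{0},u_{1}\sqsubseteq_{X,\xi}^{\sigma}b\sqsubseteq_{X,\xi}^{\sigma}b^{+}$ the two new steps lie in $R$, and antisymmetry gives $b^{+}\notin\set{u_{0},u_{1}}$, so both have distance exactly $1$. Each insertion raises the length by one, so iterating realises every prescribed $n\geq m$ once an edge is present; when $s=t$ one first inserts a detour $s\to s^{+}\to s$ to create one, covering $n\geq2$, and $n=0$ is trivial.

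I expect the padding step to be the main obstacle: a naive back-and-forth detour $u_{0}\to b^{+}\to u_{0}$ only changes the length by two, so the delicate point is to manufacture an honest \emph{new} vertex lying at distance exactly $1$ from \emph{both} endpoints of an existing edge, which is precisely what taking a common supersequence together with antisymmetry of the subsequence order provides. Finally, Part (2) follows formally, as its three clauses are the negations of those in Part (1): (2a)$\Leftrightarrow$(2b) by negating (1a)$\Leftrightarrow$(1b), and (2c) is equivalent to $s\not\equiv_{X,\xi}^{\sigma}t$ since any finite distance-one chain forces $s\equiv_{X,\xi}^{\sigma}t$ via (c)$\Rightarrow$(a), while conversely such a chain is produced from $s\equiv_{X,\xi}^{\sigma}t$ via (a)$\Rightarrow$(c).
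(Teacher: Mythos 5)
Your proof is correct, and it is in fact more careful than the paper's own. The paper's entire proof consists of the observation that $d_{S\left(X,\xi\right)}\left(s,t\right)\le n$ holds if and only if there is a chain $u_{0}=s,\dots,u_{n}=t$ whose consecutive terms are related by $\sqsubseteq_{X,\xi}^{\sigma}$ or $\sqsupseteq_{X,\xi}^{\sigma}$, after which the equivalences are declared obvious; that observation is your first paragraph, and it does settle (a)$\Leftrightarrow$(b) in both parts. What the paper's proof never addresses is the difference between a step lying in $\sqsubseteq_{X,\xi}^{\sigma}\cup\sqsupseteq_{X,\xi}^{\sigma}$ (which only gives distance $\le1$) and a step of distance \emph{exactly} $1$ as items (1c) and (2c) literally demand, nor the requirement that the chain in (1c) have length $n+1$ for an \emph{arbitrary} $n\ge d_{S\left(X,\xi\right)}\left(s,t\right)$; the obvious padding device, repeating a vertex, creates steps of distance $0$. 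Your second and third paragraphs supply exactly this missing content, and both of your supporting facts are sound: properness makes eventual constancy impossible, so repeating an entry of $u$ yields a coarse sequence $u^{+}$ with $u\sqsubseteq_{X,\xi}^{\sigma}u^{+}$ and $u^{+}\neq u$; and properness also yields antisymmetry of $\sqsubseteq_{X,\xi}^{\sigma}$ (if $s=t\circ\kappa$ and $t=s\circ\lambda$ then $s=s\circ\left(\lambda\circ\kappa\right)$, and $\lambda\circ\kappa\neq\id_{\mathbb{N}}$ would force some value of $s$ to recur infinitely often, contradicting properness), so your inserted vertex $b^{+}$ is distinct from both endpoints of the edge and the two new steps have distance exactly $1$. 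One point you should state explicitly rather than leave implicit: when $s=t$ your construction covers $n=0$ and $n\ge2$ only, and this is forced, because for $s=t$ and $n=1$ statement (1c) is actually false --- the required chain would have $u_{0}=u_{1}$ and hence a step of distance $0$, not $1$. In other words, the lemma read literally, with ``$n$ an arbitrary constant greater than or equal to $d_{S\left(X,\xi\right)}\left(s,t\right)$'', fails in this single degenerate case; your stricter reading uncovers a small imprecision that the paper's one-line proof conceals, and apart from flagging it your argument is complete.
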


\begin{proof}
Notice that $d_{S\left(X,\xi\right)}\left(s,t\right)\leq n$ if and
only if there exists a sequence $\set{u_{i}}_{i=0}^{n}$ in $S\left(X,\xi\right)$
of length $n+1$ such that $u_{0}=s$, $u_{n}=t$, and $u_{i}\sqsubseteq_{X,\xi}^{\sigma}u_{i+1}$
or $u_{i+1}\sqsubseteq_{X,\xi}^{\sigma}u_{i}$ for all $i<n$; and
that $d_{S\left(X,\xi\right)}\left(s,t\right)=\infty$ if and only
if there is no such finite sequence in $S\left(X,\xi\right)$. The
above equivalences are now obvious.
\end{proof}
\begin{thm}[Functoriality]
\label{thm:functoriality-of-S}Each coarse map $f\colon\left(X,\xi\right)\to\left(Y,\eta\right)$
functorially induces a bornologous map $S\left(f\right)\colon S\left(X,\xi\right)\to S\left(Y,\eta\right)$
by $S\left(f\right)\left(s\right):=f\circ s$.
\end{thm}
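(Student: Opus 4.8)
The plan is to establish three properties of $S\left(f\right)$ in turn: that it is well defined, that it is bornologous, and that the assignment $f\mapsto S\left(f\right)$ is functorial. First I would check well-definedness. For $s\in S\left(X,\xi\right)$ the composite $f\circ s\colon\left(\mathbb{N},0\right)\to\left(Y,\eta\right)$ is coarse, since properness and bornologousness are each stable under composition via the identities $\left(f\circ s\right)^{-1}\left(B\right)=s^{-1}\left(f^{-1}\left(B\right)\right)$ and $\left(\left(f\circ s\right)\times\left(f\circ s\right)\right)\left(E\right)=\left(f\times f\right)\left(\left(s\times s\right)\left(E\right)\right)$; moreover $\left(f\circ s\right)\left(0\right)=f\left(\xi\right)=\eta$, so the base point is preserved. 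Hence $f\circ s\in S\left(Y,\eta\right)$ and $S\left(f\right)$ is well defined.

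The heart of the argument is bornologousness, and the key observation is that $S\left(f\right)$ preserves the subsequence relation. If $s\sqsubseteq_{X,\xi}^{\sigma}t$, say $s=t\circ\varphi$ for a strictly increasing reindexing $\varphi$, then $f\circ s=\left(f\circ t\right)\circ\varphi$, whence $S\left(f\right)\left(s\right)\sqsubseteq_{Y,\eta}^{\sigma}S\left(f\right)\left(t\right)$; the same holds with $s$ and $t$ interchanged. Therefore $\left(S\left(f\right)\times S\left(f\right)\right)\left(\sqsubseteq_{X,\xi}^{\sigma}\cup\sqsupseteq_{X,\xi}^{\sigma}\right)\subseteq\left(\sqsubseteq_{Y,\eta}^{\sigma}\cup\sqsupseteq_{Y,\eta}^{\sigma}\right)$, and iterating this inclusion $n$ times yields $\left(S\left(f\right)\times S\left(f\right)\right)\left(\left(\sqsubseteq_{X,\xi}^{\sigma}\cup\sqsupseteq_{X,\xi}^{\sigma}\right)^{n}\right)\subseteq\left(\sqsubseteq_{Y,\eta}^{\sigma}\cup\sqsupseteq_{Y,\eta}^{\sigma}\right)^{n}$. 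By the very definition of the metrics, this gives $d_{S\left(Y,\eta\right)}\left(S\left(f\right)\left(s\right),S\left(f\right)\left(t\right)\right)\leq d_{S\left(X,\xi\right)}\left(s,t\right)$, so $S\left(f\right)$ is distance non-increasing. For the bounded coarse structures this at once implies bornologousness: if $\sup d_{S\left(X,\xi\right)}\left(E\right)=R<\infty$ then $\sup d_{S\left(Y,\eta\right)}\left(\left(S\left(f\right)\times S\left(f\right)\right)\left(E\right)\right)\leq R<\infty$, and $\varnothing$ is sent to $\varnothing$.

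Functoriality is then purely formal. Applied to the identity, $S\left(\id\right)\left(s\right)=\id\circ s=s$, so $S\left(\id\right)=\id$; and for composable coarse maps $f$ and $g$, associativity of composition gives $S\left(g\circ f\right)\left(s\right)=\left(g\circ f\right)\circ s=g\circ\left(f\circ s\right)=\left(S\left(g\right)\circ S\left(f\right)\right)\left(s\right)$, whence $S\left(g\circ f\right)=S\left(g\right)\circ S\left(f\right)$. I do not expect a genuine obstacle here: the only step needing any care is the bornologous one, and even that dissolves once one notices that preservation of subsequences makes $S\left(f\right)$ a $1$-Lipschitz map, after which the bounded coarse structure does the remaining work.
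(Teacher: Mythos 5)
Your proof is correct and takes essentially the same approach as the paper: well-definedness via closure of coarse maps under composition (with the same two identities), and bornologousness by observing that post-composition with $f$ preserves the subsequence relation $\sqsubseteq^{\sigma}$, so that $S\left(f\right)$ is distance non-increasing --- your relational-power inclusion $\left(S\left(f\right)\times S\left(f\right)\right)\left(\left(\sqsubseteq_{X,\xi}^{\sigma}\cup\sqsupseteq_{X,\xi}^{\sigma}\right)^{n}\right)\subseteq\left(\sqsubseteq_{Y,\eta}^{\sigma}\cup\sqsupseteq_{Y,\eta}^{\sigma}\right)^{n}$ is exactly the paper's step of mapping a witnessing chain $\set{u_{i}}_{i=0}^{n}$ to the chain $\set{f\circ u_{i}}_{i=0}^{n}$, stated relationally. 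Your explicit checks of base-point preservation and of the functor axioms $S\left(\id\right)=\id$ and $S\left(g\circ f\right)=S\left(g\right)\circ S\left(f\right)$ are details the paper leaves implicit.
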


\begin{proof}
Well-definedness: let $s\in S\left(X,\xi\right)$. Clearly $S\left(f\right)\left(s\right)$
is a map from $\left(\mathbb{N},0\right)$ to $\left(Y,\eta\right)$.
The class of coarse maps is closed under composition, so $S\left(f\right)\left(s\right)$
is coarse. (Let $E\in\mathcal{C}_{\mathbb{N}}$. Then $\left(s\times s\right)\left(E\right)\in\mathcal{C}_{X}$
by the bornologousness of $s$, so $\left(f\circ s\times f\circ s\right)\left(E\right)=\left(f\times f\right)\left(\left(s\times s\right)\left(E\right)\right)\in\mathcal{C}_{Y}$
by the bornologousness of $f$. Let $B\in\mathcal{B}_{Y}$. Then $f^{-1}\left(B\right)\in\mathcal{B}_{X}$
by the properness of $f$, and hence $\left(f\circ s\right)^{-1}\left(B\right)=s^{-1}\circ f^{-1}\left(B\right)\in\mathcal{B}_{\mathbb{N}}$
by the properness of $s$.) Hence $S\left(f\right)\left(s\right)\in S\left(Y,\eta\right)$.

Bornologousness: Let $s,t\in S\left(X,\xi\right)$ and suppose $d_{S\left(X,\xi\right)}\left(s,t\right)\leq n$,
i.e., there is a sequence $\set{u_{i}}_{i=0}^{n}$ in $S\left(X,\xi\right)$
of length $n+1$ such that $u_{0}=s$, $u_{n}=t$, and $u_{i}\sqsubseteq_{X,\xi}^{\sigma}u_{i+1}$
or $u_{i+1}\sqsubseteq_{X,\xi}^{\sigma}u_{i}$ for all $i<n$. Then
the sequence $\set{f\circ u_{i}}_{i=0}^{n}$ witnesses that $d_{S\left(Y,\eta\right)}\left(S\left(f\right)\left(s\right),S\left(f\right)\left(t\right)\right)=d_{S\left(Y,\eta\right)}\left(f\circ s,f\circ t\right)\leq n$.
\end{proof}
\begin{thm}[Preservation of bornotopy]
\label{thm:preservation-of-coarse-equivalence-of-S}If coarse maps
$f,g\colon\left(X,\xi\right)\to\left(Y,\eta\right)$ are bornotopic,
then so are $S\left(f\right),S\left(g\right)\colon S\left(X,\xi\right)\to S\left(Y,\eta\right)$.
\end{thm}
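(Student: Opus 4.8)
The plan is to unwind what bornotopy means for maps valued in the metric space $S\left(Y,\eta\right)$ and thereby reduce the statement to a single uniform estimate. Since $S\left(Y,\eta\right)$ carries the bounded coarse structure induced by $d_{S\left(Y,\eta\right)}$, the maps $S\left(f\right)$ and $S\left(g\right)$ are bornotopic if and only if
\[
\left(S\left(f\right)\times S\left(g\right)\right)\left(\Delta_{S\left(X,\xi\right)}\right)=\Set{\left(f\circ s,g\circ s\right)|s\in S\left(X,\xi\right)}\in\mathcal{C}_{S\left(Y,\eta\right)},
\]
which, by definition of the bounded coarse structure, is equivalent to the finiteness of $\sup_{s\in S\left(X,\xi\right)}d_{S\left(Y,\eta\right)}\left(f\circ s,g\circ s\right)$. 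Here both $f\circ s$ and $g\circ s$ lie in $S\left(Y,\eta\right)$ by the well-definedness part of \prettyref{thm:functoriality-of-S}. Thus it suffices to bound $d_{S\left(Y,\eta\right)}\left(f\circ s,g\circ s\right)$ by a constant independent of $s$.

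First I would fix the bornotopy witness $E:=\left(f\times g\right)\left(\Delta_{X}\right)\in\mathcal{C}_{Y}$, so that $\left(f\left(x\right),g\left(x\right)\right)\in E$ for every $x\in X$. Given $s\in S\left(X,\xi\right)$, set $a:=f\circ s$ and $b:=g\circ s$ and define the interleaved sequence $c\colon\mathbb{N}\to Y$ by $c\left(2k\right):=a\left(k\right)$ and $c\left(2k+1\right):=b\left(k\right)$. Because $f$ and $g$ preserve the base point, $a\left(0\right)=b\left(0\right)=\eta$, so $c\left(0\right)=\eta$; moreover the index maps $k\mapsto2k$ and $k\mapsto2k+1$ are strictly increasing and exhibit $a$ and $b$ as subsequences of $c$, so that $a\sqsubseteq_{Y,\eta}^{\sigma}c$ and $b\sqsubseteq_{Y,\eta}^{\sigma}c$ once we know $c\in S\left(Y,\eta\right)$.

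The main obstacle is precisely to verify that $c$ is a coarse sequence, and this is where the hypothesis enters. Properness of $c$ is routine, since $c^{-1}\left(B\right)$ is contained in the union of the two bounded sets obtained from $a^{-1}\left(B\right)$ and $b^{-1}\left(B\right)$ under the index embeddings. For bornologousness, a controlled set of $\mathbb{N}$ is contained in some $\Set{\left(m,n\right)|\left|m-n\right|\leq R}$, so it is enough to control $\Set{\left(c\left(m\right),c\left(n\right)\right)|\left|m-n\right|\leq R}$. I would split into the four cases given by the parities of $m$ and $n$: same-parity pairs are controlled by the bornologousness of $a$ and of $b$ at index-differences at most $\lceil R/2\rceil$, while a mixed pair $\left(a\left(k\right),b\left(k'\right)\right)$ is obtained by composing such a controlled set with $E$. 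Since $\mathcal{C}_{Y}$ is closed under composition, inverse, and finite union, the whole set lies in a single element of $\mathcal{C}_{Y}$, so $c$ is bornologous and hence $c\in S\left(Y,\eta\right)$.

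Finally, from $a\sqsubseteq_{Y,\eta}^{\sigma}c$ and $b\sqsubseteq_{Y,\eta}^{\sigma}c$ we get $\left(a,b\right)\in\sqsubseteq_{Y,\eta}^{\sigma}\circ\sqsupseteq_{Y,\eta}^{\sigma}\subseteq\left(\sqsubseteq_{Y,\eta}^{\sigma}\cup\sqsupseteq_{Y,\eta}^{\sigma}\right)^{2}$, whence $d_{S\left(Y,\eta\right)}\left(f\circ s,g\circ s\right)\leq2$. As this bound does not depend on $s$, the supremum above is at most $2$, and therefore $S\left(f\right)$ and $S\left(g\right)$ are bornotopic.
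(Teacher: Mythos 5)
Your proposal is correct and follows essentially the same route as the paper: interleave $f\circ s$ and $g\circ s$ into a single sequence, verify it is proper and bornologous using the bornotopy witness $\left(f\times g\right)\left(\Delta_{X}\right)$ together with the bornologousness of the two composites, and conclude that $d_{S\left(Y,\eta\right)}\left(f\circ s,g\circ s\right)\leq2$ uniformly in $s$. The only cosmetic difference is that you state the reduction to a uniform distance bound up front, whereas the paper derives the same bound and then exhibits the controlled set containing $\left(S\left(f\right)\times S\left(g\right)\right)\left(\Delta_{S\left(X,\xi\right)}\right)$.
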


\begin{proof}
Let $s\in S\left(X,\xi\right)$. We define a map $t\colon\left(\mathbb{N},0\right)\to\left(Y,\eta\right)$
as follows:
\[
t\left(i\right):=\begin{cases}
S\left(f\right)\left(s\right)\left(j\right), & i=2j,\\
S\left(g\right)\left(s\right)\left(j\right), & i=2j+1.
\end{cases}
\]
Let us verify that $t\in S\left(Y,\eta\right)$. Firstly, let $B\in\mathcal{B}_{Y}$.
Then
\[
t^{-1}\left(B\right)=2\left(S\left(f\right)\left(s\right)\right)^{-1}\left(B\right)\cup\left(2\left(S\left(g\right)\left(s\right)\right)^{-1}\left(B\right)+1\right).
\]
Since $S\left(f\right)\left(s\right)$ and $S\left(g\right)\left(s\right)$
are proper, $2\left(S\left(f\right)\left(s\right)\right)^{-1}\left(B\right)$
and $2\left(S\left(g\right)\left(s\right)\right)^{-1}\left(B\right)+1$
are bounded in $\mathbb{N}$ (i.e. finite), so $t^{-1}\left(B\right)\in\mathcal{B}_{\mathbb{N}}$.
Therefore $t$ is proper. Secondly, let $n\in\mathbb{N}$. Since $S\left(f\right)\left(s\right)$
and $S\left(g\right)\left(s\right)$ are bornologous, there exists
an $E\in\mathcal{C}_{Y}$ such that $\left(S\left(f\right)\left(s\right)\left(i\right),S\left(f\right)\left(s\right)\left(j\right)\right)\in E$
and $\left(S\left(g\right)\left(s\right)\left(i\right),S\left(g\right)\left(s\right)\left(j\right)\right)\in E$
hold for all $i,j\in\mathbb{N}$ with $\left|i-j\right|\leq n$. Since
$f$ and $g$ are bornotopic,
\begin{align*}
F & :=\set{\left(S\left(f\right)\left(s\right)\left(i\right),S\left(g\right)\left(s\right)\left(i\right)\right)|i\in\mathbb{N}}\\
 & =\set{\left(f\circ s\left(i\right),g\circ s\left(i\right)\right)|i\in\mathbb{N}}\\
 & \subseteq\left(f\times g\right)\left(\Delta_{X}\right)\\
 & \in\mathcal{C}_{Y}.
\end{align*}
Then $\left(S\left(f\right)\left(s\right)\left(i\right),S\left(g\right)\left(s\right)\left(j\right)\right)\in E\circ F\in\mathcal{C}_{Y}$
and $\left(S\left(g\right)\left(s\right)\left(i\right),S\left(f\right)\left(s\right)\left(j\right)\right)\in E\circ F^{-1}\in\mathcal{C}_{Y}$
hold for all $i,j\in\mathbb{N}$ with $\left|i-j\right|\leq n$. Now
let $G:=E\cup\left(E\circ F\right)\cup\left(E\circ F^{-1}\right)\in\mathcal{C}_{Y}$.
Then $\left(t\left(i\right),t\left(j\right)\right)\in G$ holds for
all $i,j\in\mathbb{N}$ with $\left|i-j\right|\leq n$. Therefore
$t$ is bornologous.

Both $S\left(f\right)\left(s\right)$ and $S\left(g\right)\left(s\right)$
are subsequences of $t$, i.e., $S\left(f\right)\left(s\right)\sqsubseteq_{Y,\eta}^{\sigma}t\sqsupseteq_{Y,\eta}^{\sigma}S\left(g\right)\left(s\right)$,
so $d_{S\left(Y,\eta\right)}\left(S\left(f\right)\left(s\right),S\left(g\right)\left(s\right)\right)\leq2$.
Hence
\begin{align*}
\left(S\left(f\right)\times S\left(g\right)\right)\left(\Delta_{S\left(X,\xi\right)}\right) & \subseteq\set{\left(u,v\right)\in S\left(Y,\eta\right)\times S\left(Y,\eta\right)|d_{S\left(Y,\eta\right)}\left(u,v\right)\leq2}\\
 & \in\mathcal{C}_{S\left(Y,\eta\right)}.
\end{align*}
Therefore $S\left(f\right)$ and $S\left(g\right)$ are bornotopic.
\end{proof}
The next theorem shows that the base point can be replaced with any
other point lying in the same coarsely connected component.
\begin{thm}[Changing the base point]
\label{thm:changing-the-base-point}Let $X$ be a coarse space, and
$\xi_{1},\xi_{2}\in X$. If $\mathcal{Q}_{X}\left(\xi_{1}\right)=\mathcal{Q}_{X}\left(\xi_{2}\right)$,
then $S\left(X,\xi_{1}\right)$ and $S\left(X,\xi_{2}\right)$ are
isometric.
\end{thm}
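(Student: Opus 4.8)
The plan is to realise the claimed isometry in the form $S\left(\rho\right)$ for a well-chosen asymorphism $\rho$ of $X$ that swaps the two base points, and then to read off that it is distance-preserving from the functoriality of $S$ established in \prettyref{thm:functoriality-of-S}. If $\xi_{1}=\xi_{2}$ there is nothing to prove (take $\rho=\id_{X}$), so assume $\xi_{1}\neq\xi_{2}$. The hypothesis $\mathcal{Q}_{X}\left(\xi_{1}\right)=\mathcal{Q}_{X}\left(\xi_{2}\right)$ gives $\xi_{2}\in\mathcal{Q}_{X}\left(\xi_{1}\right)=\bigcup_{\xi_{1}\in B\in\mathcal{B}_{X}}B$, so some bounded set contains both base points; hence $\set{\xi_{1},\xi_{2}}$ is bounded and $D:=\set{\xi_{1},\xi_{2}}\times\set{\xi_{1},\xi_{2}}\in\mathcal{C}_{X}$. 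I define $\rho\colon X\to X$ to be the transposition exchanging $\xi_{1}$ and $\xi_{2}$ and fixing every other point; it is an involution with $\rho\left(\xi_{1}\right)=\xi_{2}$, so it is a base-point-preserving bijection $\left(X,\xi_{1}\right)\to\left(X,\xi_{2}\right)$ whose inverse is the base-point-preserving bijection $\rho\colon\left(X,\xi_{2}\right)\to\left(X,\xi_{1}\right)$.

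The next step is to verify that $\rho$ is an asymorphism. Its graph relation $\Gamma:=\set{\left(x,\rho\left(x\right)\right)|x\in X}$ is contained in $\Delta_{X}\cup D\in\mathcal{C}_{X}$, so $\Gamma\in\mathcal{C}_{X}$. For any $E\in\mathcal{C}_{X}$ and any $\left(x,y\right)\in E$ one has $\left(\rho\left(x\right),x\right)\in\Gamma^{-1}$, $\left(x,y\right)\in E$ and $\left(y,\rho\left(y\right)\right)\in\Gamma$, whence $\left(\rho\left(x\right),\rho\left(y\right)\right)\in\Gamma^{-1}\circ E\circ\Gamma$; thus $\left(\rho\times\rho\right)\left(E\right)\subseteq\Gamma^{-1}\circ E\circ\Gamma\in\mathcal{C}_{X}$ and $\rho$ is bornologous. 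Since $\rho=\rho^{-1}$, its inverse is bornologous as well, so $\rho$ is an asymorphism; it is in particular coarse, properness being automatic because the bornologous inverse $\rho^{-1}$ preserves bounded sets (so $\rho^{-1}\left(B\right)\in\mathcal{B}_{X}$ for every $B\in\mathcal{B}_{X}$).

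Finally I would assemble the isometry. By \prettyref{thm:functoriality-of-S} the two coarse maps $\rho\colon\left(X,\xi_{1}\right)\to\left(X,\xi_{2}\right)$ and $\rho\colon\left(X,\xi_{2}\right)\to\left(X,\xi_{1}\right)$ induce bornologous maps $S\left(\rho\right)$ in both directions, and functoriality yields $S\left(\rho\right)\circ S\left(\rho\right)=S\left(\rho\circ\rho\right)=S\left(\id_{X}\right)=\id$, so each $S\left(\rho\right)$ is a bijection. Moreover the argument in the proof of \prettyref{thm:functoriality-of-S} actually shows $d_{S\left(X,\xi_{2}\right)}\left(\rho\circ s,\rho\circ t\right)\leq d_{S\left(X,\xi_{1}\right)}\left(s,t\right)$ for all $s,t\in S\left(X,\xi_{1}\right)$; applying the same inequality in the reverse direction to $\rho\circ s,\rho\circ t$ and using $\rho\circ\rho=\id_{X}$ gives $d_{S\left(X,\xi_{1}\right)}\left(s,t\right)\leq d_{S\left(X,\xi_{2}\right)}\left(\rho\circ s,\rho\circ t\right)$, so the two distances coincide. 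Hence $S\left(\rho\right)$ is a distance-preserving bijection, i.e.\ an isometry. The only genuine computation here is the bornologousness of $\rho$; the rest is bookkeeping with the functor $S$, and it is precisely the symmetry between the two directions that upgrades ``bornologous'' to ``isometric''.
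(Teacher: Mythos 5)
Your proof is correct, but it takes a genuinely different route from the paper's. The paper works entirely inside the sequence spaces: it defines explicit mutually inverse maps $T_{21}\colon S\left(X,\xi_{1}\right)\to S\left(X,\xi_{2}\right)$ and $T_{12}$ that replace only the initial term of a coarse sequence by the other base point, and then checks by hand that these are well defined (proper, bornologous) and preserve the subsequence relation $\sqsubseteq_{X,\xi}^{\sigma}$ in both directions, hence preserve $d_{S}$. You instead work on the underlying space: the hypothesis $\mathcal{Q}_{X}\left(\xi_{1}\right)=\mathcal{Q}_{X}\left(\xi_{2}\right)$ makes $\set{\xi_{1},\xi_{2}}$ bounded, the transposition $\rho$ is then a pointed asymorphism $\left(X,\xi_{1}\right)\to\left(X,\xi_{2}\right)$, and the functor $S$ of \prettyref{thm:functoriality-of-S} turns it into the desired isometry. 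Note that the two isometries are different maps: $T_{21}$ alters only the $0$th entry, whereas $S\left(\rho\right)\left(s\right)=\rho\circ s$ also swaps every later occurrence of $\xi_{1}$ and $\xi_{2}$ along the sequence; both work. What your argument buys is economy and generality: the only computation is the (easy) bornologousness of $\rho$, and en route you establish the reusable fact that $S$ carries pointed asymorphisms to isometries, which is in the spirit of the paper's functorial reformulation. The one point to flag is that the \emph{statement} of \prettyref{thm:functoriality-of-S} alone (bornologousness of $S\left(f\right)$) would not suffice: a bijection that is bornologous with bornologous inverse is a priori only an asymorphism, not an isometry. You must, as you do, reopen the \emph{proof} of that theorem to extract the sharper estimate $d_{S\left(Y,\eta\right)}\left(f\circ s,f\circ t\right)\leq d_{S\left(X,\xi\right)}\left(s,t\right)$ (or re-derive it in one line: any chain witnessing $d_{S\left(X,\xi\right)}\left(s,t\right)\leq n$ pushes forward under $f$ to a chain witnessing the same bound for the images). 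With that made explicit, your proof is complete.
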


\begin{proof}
Define two maps $T_{21}\colon S\left(X,\xi_{1}\right)\to S\left(X,\xi_{2}\right)$
and $T_{12}\colon S\left(X,\xi_{2}\right)\to S\left(X,\xi_{1}\right)$
by
\begin{align*}
T_{21}\left(s\right)\left(i\right) & :=\begin{cases}
\xi_{2}, & i=0,\\
s\left(i\right), & i>0,
\end{cases}\\
T_{12}\left(t\right)\left(i\right) & :=\begin{cases}
\xi_{1}, & i=0,\\
t\left(i\right), & i>0.
\end{cases}
\end{align*}
We first verify the well-definedness, i.e. $T_{21}\left(s\right)\in S\left(X,\xi_{2}\right)$
and $T_{12}\left(t\right)\in S\left(X,\xi_{1}\right)$. Obviously
$T_{21}\left(s\right)\left(0\right)=\xi_{2}$. Let $B\in\mathcal{B}_{X}$.
Then $\left(T_{21}\left(s\right)\right)^{-1}\left(B\right)\subseteq s^{-1}\left(B\right)\cup\set{0}$,
where $s^{-1}\left(B\right)$ is bounded in $\mathbb{N}$ (i.e. finite)
by the properness of $s$, so $\left(T_{21}\left(s\right)\right)^{-1}\left(B\right)$
is also bounded in $\mathbb{N}$. Hence $T_{21}\left(s\right)$ is
proper. Next, let $E\in\mathcal{C}_{\mathbb{N}}$. For each $\left(i,j\right)\in E$,
there are the following possibilities:
\begin{casenv}
\item $i=j=0$. Then $T_{21}\left(s\right)\left(i\right)=\xi_{2}=T_{21}\left(s\right)\left(j\right)$,
so $\left(T_{21}\left(s\right)\left(i\right),T_{21}\left(s\right)\left(j\right)\right)\in\Delta_{X}\in\mathcal{C}_{X}$.
\item $i=0$ and $j\neq0$. In this case, $T_{21}\left(s\right)\left(i\right)=\xi_{2}$,
$s\left(i\right)=\xi_{1}$ and $s\left(j\right)=T_{21}\left(s\right)\left(j\right)$.
So $\left(T_{21}\left(s\right)\left(i\right),T_{21}\left(s\right)\left(j\right)\right)\in\set{\left(\xi_{2},\xi_{1}\right)}\circ\left(s\times s\right)\left(E\right)\in\mathcal{C}_{X}$.
\item $i\neq0$ and $j=0$. Similarly to the above case, we have $\left(T_{21}\left(s\right)\left(i\right),T_{21}\left(s\right)\left(j\right)\right)\in\left(s\times s\right)\left(E\right)\circ\set{\left(\xi_{1},\xi_{2}\right)}\in\mathcal{C}_{X}$.
\item $i\neq0$ and $j\neq0$. Then $T_{21}\left(s\right)\left(i\right)=s\left(i\right)$
and $T_{21}\left(s\right)\left(j\right)=s\left(j\right)$, whence
we have $\left(T_{21}\left(s\right)\left(i\right),T_{21}\left(s\right)\left(j\right)\right)\in\left(s\times s\right)\left(E\right)\in\mathcal{C}_{X}$.
\end{casenv}
Set $F:=\Delta_{X}\cup\left(\set{\left(\xi_{2},\xi_{1}\right)}\circ\left(s\times s\right)\left(E\right)\right)\cup\left(\left(s\times s\right)\left(E\right)\circ\set{\left(\xi_{1},\xi_{2}\right)}\right)\cup\left(s\times s\right)\left(E\right)$.
Then $\left(T_{21}\left(s\right),T_{21}\left(s\right)\right)\left(E\right)\subseteq F\in\mathcal{C}_{X}$,
so $\left(T_{21}\left(s\right),T_{21}\left(s\right)\right)\left(E\right)\in\mathcal{C}_{X}$.
Hence $T_{21}\left(s\right)$ is bornologous. Since the definitions
are symmetric, the same argument applies to $T_{12}\left(t\right)$.

Clearly $T_{12}\circ T_{21}=\id_{S\left(X,\xi_{1}\right)}$ and $T_{21}\circ T_{12}=\id_{S\left(X,\xi_{2}\right)}$.
It suffices to prove that $T_{21}$ is an isometry.

Let $s,t\in S\left(X,\xi_{1}\right)$ and suppose that $s\sqsubseteq_{X,\xi}^{\sigma}t$,
i.e., there is a strictly monotone function $\kappa\colon\mathbb{N}\to\mathbb{N}$
such that $s=t\circ\kappa$. Since $\kappa\left(i\right)\geq i$,
we have $T_{21}\left(s\right)\left(i\right)=s\left(i\right)=t\left(\kappa\left(i\right)\right)=T_{21}\left(t\right)\left(\kappa\left(i\right)\right)$
for all $i>0$. Now, define
\[
\kappa'\left(i\right):=\begin{cases}
0, & i=0,\\
\kappa\left(i\right), & i>0.
\end{cases}
\]
Then $T_{21}\left(s\right)\left(i\right)=T_{21}\left(t\right)\left(\kappa'\left(i\right)\right)$
holds for all $i\in\mathbb{N}$ (including the case $i=0$). Hence
$T_{21}\left(s\right)\sqsubseteq_{X,\xi_{2}}^{\sigma}T_{21}\left(t\right)$.
Note that, by symmetry, the same applies to $T_{12}$. Conversely,
let $s,t\in S\left(X,\xi_{1}\right)$ and suppose $T_{21}\left(s\right)\sqsubseteq_{X,\xi_{2}}^{\sigma}T_{21}\left(t\right)$.
Then $s=T_{12}\circ T_{21}\left(s\right)\sqsubseteq_{X,\xi_{1}}^{\sigma}T_{12}\circ T_{21}\left(t\right)=t$.

Now, let $s,t\in S\left(X,\xi_{1}\right)$ and suppose $d_{S\left(X,\xi_{1}\right)}\left(s,t\right)\leq n$,
i.e., there is a sequence $\set{u_{i}}_{i=0}^{n}$ in $S\left(X,\xi\right)$
of length $n+1$ such that $u_{0}=s$, $u_{n}=t$, and $u_{i}\sqsubseteq_{X,\xi}^{\sigma}u_{i+1}$
or $u_{i+1}\sqsubseteq_{X,\xi}^{\sigma}u_{i}$ for all $i<n$. By
the previous paragraph, $T_{21}\left(u_{i}\right)\sqsubseteq_{X,\xi}^{\sigma}T_{21}\left(u_{i+1}\right)$
or $T_{21}\left(u_{i+1}\right)\sqsubseteq_{X,\xi}^{\sigma}T_{21}\left(u_{i}\right)$
for all $i<n$. Hence $d_{S\left(X,\xi_{2}\right)}\left(T_{21}\left(s\right),T_{21}\left(t\right)\right)\leq n$.
The same applies to $T_{12}$ by symmetry. Conversely, let $s,t\in S\left(X,\xi_{1}\right)$
and suppose $d_{S\left(X,\xi_{2}\right)}\left(T_{21}\left(s\right),T_{21}\left(t\right)\right)\leq n$.
Then $d_{S\left(X,\xi_{1}\right)}\left(s,t\right)=d_{S\left(X,\xi_{1}\right)}\left(T_{12}\circ T_{21}\left(s\right),T_{12}\circ T_{21}\left(t\right)\right)\leq n$.
It follows that both $T_{21}$ and $T_{12}$ are isometries.
\end{proof}
In fact, the metric function $d_{S\left(X,\xi\right)}$ only takes
the values $0$, $1$, $2$ and $\infty$. To show this fact, we need
the ``confluence'' property of $\sqsubseteq_{X,\xi}^{\sigma}$.
\begin{lem}[{DeLyser, LaBuz and Tobash \cite[Lemma 3.1]{DLT13}}]
\label{lem:One-step-confluence}Let $s,t,u\in S\left(X,\xi\right)$
and suppose $s\sqsubseteq_{X,\xi}^{\sigma}t,u$. Then there is a $v\in S\left(X,\xi\right)$
such that $t,u\sqsubseteq_{X,\xi}^{\sigma}v$.
\[
\xymatrix{ & s\ar[dl]_{\sqsubseteq_{X,\xi}^{\sigma}}\ar[dr]^{\sqsubseteq_{X,\xi}^{\sigma}}\\
t\ar@{-->}[dr]_{\sqsubseteq_{X,\xi}^{\sigma}} &  & u\ar@{-->}[dl]^{\sqsubseteq_{X,\xi}^{\sigma}}\\
 & v
}
\]
\end{lem}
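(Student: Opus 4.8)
The plan is to build the common upper bound $v$ by \emph{interleaving} $t$ and $u$ along their shared subsequence $s$. Since $s\sqsubseteq_{X,\xi}^{\sigma}t$ and $s\sqsubseteq_{X,\xi}^{\sigma}u$, I would fix strictly increasing maps $\varphi,\psi\colon\mathbb{N}\to\mathbb{N}$ with $s=t\circ\varphi$ and $s=u\circ\psi$, so that $s(k)=t(\varphi(k))=u(\psi(k))$ for every $k$. Putting $\varphi(-1):=\psi(-1):=-1$, I define $v$ to be the concatenation of blocks $C_{0},C_{1},C_{2},\dots$, where each $C_{k}$ first lists the $u$-entries $u(\psi(k-1)+1),\dots,u(\psi(k)-1)$ and then the $t$-entries $t(\varphi(k-1)+1),\dots,t(\varphi(k))$. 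The last $t$-entry of $C_{k}$ is $t(\varphi(k))=s(k)=u(\psi(k))$, so the $t$-entries of $v$ occupy a strictly increasing set of positions that reproduces $t$ in order, and likewise the $u$-entries reproduce $u$; this gives $t\sqsubseteq_{X,\xi}^{\sigma}v$ and $u\sqsubseteq_{X,\xi}^{\sigma}v$ at once, while $v(0)=\xi$ since the first nonempty block begins with $u(0)=\xi$ or $t(0)=\xi$.

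It then remains to check that $v\in S(X,\xi)$, i.e. that $v$ is a coarse sequence. Properness is routine: for $B\in\mathcal{B}_{X}$ the positions $i$ with $v(i)\in B$ inject into $t^{-1}(B)\sqcup u^{-1}(B)$, which is finite because $t$ and $u$ are proper. The substantial point, and the step I expect to be the main obstacle, is \emph{bornologousness}. Since the bounded coarse structure of $\mathbb{N}$ is generated by the entourages $E_{n}:=\set{(i,j)|\,|i-j|\le n}$, it suffices to show that $(v\times v)(E_{n})\in\mathcal{C}_{X}$ for every $n$. I would argue by cases on the types of $v(i)$ and $v(j)$ (a ``$t$-entry'' or a ``$u$-entry''). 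If both are $t$-entries $t(p),t(q)$, then, because all $t$-entries appear in $v$ in their natural order and without omission, one has $|p-q|\le|i-j|\le n$, whence $(v(i),v(j))\in T_{n}:=\set{(t(p),t(q))|\,|p-q|\le n}\in\mathcal{C}_{X}$ by bornologousness of $t$; the case of two $u$-entries is identical, giving $U_{n}\in\mathcal{C}_{X}$.

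For a mixed pair the position window from $i$ to $j$ meets the boundary between the $u$-part and the $t$-part of one or more blocks, and the idea is to compare the two entries through a nearby copy of an $s$-term. Counting entries as above shows that the $u$-entry lies within $n$ (in $u$-index) of some $s(k)$ and the $t$-entry within $n$ (in $t$-index) of some $s(k')$, where $|k-k'|\le 1$. Hence $(v(i),v(j))$ lies in $T_{n}\circ S_{1}\circ U_{n}$ (or a symmetric composite), where $S_{1}:=\set{(s(a),s(b))|\,|a-b|\le 1}\in\mathcal{C}_{X}$ comes from the bornologousness of $s$ itself. Taking $G_{n}$ to be the finite union of $T_{n}$, $U_{n}$ and these composites yields $(v\times v)(E_{n})\subseteq G_{n}\in\mathcal{C}_{X}$.

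The one delicate point I would be careful about is precisely the occurrence of two \emph{distinct} anchors $s(k)$ and $s(k')$: inside a single block $C_{k}$ the $u$-part clusters near $s(k)$ while the $t$-part clusters near $s(k-1)$, so the bridge $S_{1}$ between consecutive $s$-terms, available because $s$ is itself a coarse sequence, is exactly what is needed here and cannot be avoided. Once bornologousness is established, $v\in S(X,\xi)$ with $t\sqsubseteq_{X,\xi}^{\sigma}v$ and $u\sqsubseteq_{X,\xi}^{\sigma}v$, which is the assertion of the lemma.
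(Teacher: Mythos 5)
Your construction is correct, and at its core it is the same as the paper's: interleave $t$ and $u$ in blocks delimited by the terms of the common subsequence $s$, then check that the interleaving is a coarse sequence having both $t$ and $u$ as subsequences. The difference --- and it is precisely what creates the ``delicate point'' you flag --- is that the paper's blocks overlap at the anchors: its $v$ lists $t(\kappa(k)),\ldots,t(\kappa(k+1))$ and $u(\lambda(k)),\ldots,u(\lambda(k+1))$ with \emph{both} endpoints included, so every anchor value $s(k)$ is written twice, and consequently every pair of adjacent entries of $v$ has one of the forms $(t(j),t(j\pm1))$, $(u(j),u(j\pm1))$, $(s(j),s(j\pm1))$, $(s(j),s(j))$. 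Bornologousness then follows in one line, $(v\times v)(E)\subseteq(t\times t)(E)\cup(s\times s)(E)\cup(u\times u)(E)\cup\Delta_{X}\in\mathcal{C}_{X}$, where $E$ is the generating entourage of pairs at distance at most $1$; no case analysis over general $E_{n}$ and no composition of entourages is needed. Because your $u$-parts omit the anchors $u(\psi(k))$ (each occurs only once, as the $t$-entry $t(\varphi(k))$), you cannot reduce to adjacent pairs and are forced into the counting argument with the composite $U_{n}\circ S_{1}\circ T_{n}$. That argument does go through: your key claim (anchors $s(a)$, $s(b)$ with $|a-b|\le1$ within index-distance $n$ of the two compared entries) is true, because every $u$-index lying strictly between the indices of two compared entries occupies a position of $v$ strictly between them, either as a designated $u$-entry or as a block-end $t$-entry. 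Note, however, that this same observation is also needed to justify the two-$u$-entries case, which is therefore not literally ``identical'' to the two-$t$-entries case (the designated $u$-entries appear in order but \emph{with} omissions); and if you want to spare yourself the $|a-b|\le1$ bookkeeping, observe that $S_{n}\in\mathcal{C}_{X}$ is just as available as $S_{1}$, so the cruder composite $U_{n}\circ S_{n}\circ T_{n}$ suffices. In short: both proofs use the same construction, but duplicating the anchor entries, as the paper does, turns the hardest third of your argument into a triviality.
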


\begin{proof}
By the definition of ``subsequence'', there are strictly monotone
functions $\kappa,\lambda\colon\mathbb{N}\to\mathbb{N}$ such that
$s=t\circ\kappa=u\circ\lambda$. The desired sequence $v\in S\left(X,\xi\right)$
is given by
\begin{gather*}
\underset{t\left(0\right),\ldots,t\left(\kappa\left(1\right)\right)}{\underbrace{s\left(0\right),t\left(1\right),\ldots,t\left(\kappa\left(1\right)-1\right),s\left(1\right)}},\underset{u\left(0\right),\ldots,u\left(\lambda\left(1\right)\right)}{\underbrace{s\left(0\right),u\left(1\right),\ldots,u\left(\lambda\left(1\right)-1\right),s\left(1\right)}},\\
\underset{t\left(\kappa\left(1\right)\right),\ldots,t\left(\kappa\left(2\right)\right)}{\underbrace{s\left(1\right),t\left(\kappa\left(1\right)+1\right),\ldots,t\left(\kappa\left(2\right)-1\right),s\left(2\right)}},\underset{u\left(\lambda\left(1\right)\right),\ldots,u\left(\lambda\left(2\right)\right)}{\underbrace{s\left(1\right),u\left(\lambda\left(1\right)+1\right),\ldots,u\left(\lambda\left(2\right)-1\right),s\left(2\right)}},\\
\vdots
\end{gather*}
Obviously $v$ has $t$ and $u$ as subsequences. Let $E=\set{\left(i,j\right)|\left|i-j\right|\leq1}$.
(Note that $\mathcal{C}_{\mathbb{N}}$ is generated by $\set{E^{n}|n\in\mathbb{N}}$.)
Since $s$, $t$ and $u$ are bornologous, $\left(s\times s\right)\left(E\right),\left(t\times t\right)\left(E\right),\left(u\times u\right)\left(E\right)\in\mathcal{C}_{X}$.
Any two adjacent points $\left(v\left(i\right),v\left(i\pm1\right)\right)$
are one of the following forms:
\[
\left(t\left(j\right),t\left(j\pm1\right)\right),\ \left(s\left(j\right),s\left(j\pm1\right)\right),\ \left(u\left(j\right),u\left(j\pm1\right)\right),\ \left(s\left(j\right),s\left(j\right)\right),
\]
so $\left(v\times v\right)\left(E\right)\subseteq\left(t\times t\right)\left(E\right)\cup\left(s\times s\right)\left(E\right)\cup\left(u\times u\right)\left(E\right)\cup\Delta_{X}\in\mathcal{C}_{X}$.
Hence $v$ is bornologous. Similarly, one can easily prove that $v$
is proper (i.e. diverges to infinity).
\end{proof}
\begin{lem}[{DeLyser, LaBuz and Tobash \cite[Proposition 3.2]{DLT13}}]
\label{lem:Confluence}Let $s,t\in S\left(X,\xi\right)$ and suppose
$s\equiv_{X,\xi}^{\sigma}t$. Then there is a $u\in S\left(X,\xi\right)$
such that $s,t\sqsubseteq_{X,\xi}^{\sigma}u$.
\[
\xymatrix{s\ar@{-->}[dr]_{\sqsubseteq_{X,\xi}^{\sigma}}\ar@{=}[rr]^{\equiv_{X,\xi}^{\sigma}} &  & t\ar@{-->}[dl]^{\sqsubseteq_{X,\xi}^{\sigma}}\\
 & u
}
\]
\end{lem}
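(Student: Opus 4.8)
The plan is to induct on the length of a zig-zag chain witnessing $s \equiv_{X,\xi}^{\sigma} t$, using \prettyref{lem:One-step-confluence} to amalgamate, at each stage, the two sequences that sit above a common subsequence. First I would record that $\sqsubseteq_{X,\xi}^{\sigma}$ is reflexive (via the identity function) and transitive (the composite of two strictly monotone functions is again strictly monotone); transitivity is used repeatedly below. By the definition of $\equiv_{X,\xi}^{\sigma}$, the hypothesis furnishes a finite sequence $\set{u_i}_{i=0}^{n}$ with $u_0 = s$, $u_n = t$, and $u_i \sqsubseteq_{X,\xi}^{\sigma} u_{i+1}$ or $u_{i+1} \sqsubseteq_{X,\xi}^{\sigma} u_i$ for each $i < n$.

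Rather than prove the statement directly, I would establish the slightly stronger claim that there is a single $w \in S(X,\xi)$ with $u_i \sqsubseteq_{X,\xi}^{\sigma} w$ for \emph{every} $i \le n$; the lemma then follows by taking $u := w$, since $s = u_0$ and $t = u_n$. The proof is by induction on $n$. The base case $n = 0$ is immediate with $w := u_0$. For the inductive step, applying the induction hypothesis to the initial segment $u_0, \ldots, u_{n-1}$ produces a $w'$ with $u_i \sqsubseteq_{X,\xi}^{\sigma} w'$ for all $i \le n-1$; in particular $u_{n-1} \sqsubseteq_{X,\xi}^{\sigma} w'$. It then remains to absorb the last term $u_n$.

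Here the final edge splits into two cases. If $u_n \sqsubseteq_{X,\xi}^{\sigma} u_{n-1}$, then transitivity through $u_{n-1} \sqsubseteq_{X,\xi}^{\sigma} w'$ gives $u_n \sqsubseteq_{X,\xi}^{\sigma} w'$, and $w := w'$ already dominates the whole chain. If instead $u_{n-1} \sqsubseteq_{X,\xi}^{\sigma} u_n$, then $u_{n-1}$ is a common subsequence of both $w'$ and $u_n$, which is exactly the hypothesis of \prettyref{lem:One-step-confluence}; that lemma supplies a $v$ with $w' \sqsubseteq_{X,\xi}^{\sigma} v$ and $u_n \sqsubseteq_{X,\xi}^{\sigma} v$. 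Setting $w := v$ and invoking transitivity ($u_i \sqsubseteq_{X,\xi}^{\sigma} w' \sqsubseteq_{X,\xi}^{\sigma} v$ for $i \le n-1$) completes the step.

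Since \prettyref{lem:One-step-confluence} is already in hand, I expect no serious obstacle; the single point deserving care is the form of the induction. One must carry a common supersequence of the \emph{entire} initial segment $u_0, \ldots, u_{n-1}$, not merely of its endpoints, so that at each stage the configuration $w' \sqsupseteq_{X,\xi}^{\sigma} u_{n-1} \sqsubseteq_{X,\xi}^{\sigma} u_n$ matches the hypothesis of \prettyref{lem:One-step-confluence} verbatim. With this formulation the argument is a routine peak-capping induction.
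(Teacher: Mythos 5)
Your proof is correct and follows essentially the same route as the paper's: induction on the length of the zig-zag chain, capping the peak $u_{n-1}\sqsubseteq_{X,\xi}^{\sigma}u_{n}$ with \prettyref{lem:One-step-confluence} and absorbing a descending last edge by transitivity. The only difference is your strengthened induction hypothesis (a single supersequence dominating all of $u_{0},\ldots,u_{n-1}$), and, contrary to your closing remark, this strengthening is not necessary: the paper's induction carries only the two endpoints $u_{0},u_{n-1}\sqsubseteq_{X,\xi}^{\sigma}v$, which already suffices because $u_{n-1}$ is itself an endpoint of the initial segment, so the configuration $v\sqsupseteq_{X,\xi}^{\sigma}u_{n-1}\sqsubseteq_{X,\xi}^{\sigma}u_{n}$ required by \prettyref{lem:One-step-confluence} is available verbatim.
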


\begin{proof}
Choose a sequence $\set{u_{i}}_{i=0}^{n}$ in $S\left(X,\xi\right)$
such that $u_{0}=s$, $u_{n}=t$, and $u_{i}\sqsubseteq_{X,\xi}^{\sigma}u_{i+1}$
or $u_{i+1}\sqsubseteq_{X,\xi}^{\sigma}u_{i}$ for all $i<n$. We
show that there is a $v\in S\left(X,\xi\right)$ such that $u_{0},u_{n}\sqsubseteq_{X,\xi}^{\sigma}v$
by induction on the length $n$. The base case $n=0$ is trivial.
Suppose $n>0$. Since $u_{0}\equiv_{X,\xi}^{\sigma}u_{n-1}$, there
is a $v\in S\left(X,\xi\right)$ such that $u_{0},u_{n-1}\sqsubseteq_{X,\xi}^{\sigma}v$
by the induction hypothesis.
\begin{casenv}
\item $u_{n-1}\sqsubseteq_{X,\xi}^{\sigma}u_{n}$. Since $u_{n-1}\sqsubseteq_{X,\xi}^{\sigma}u_{n},v$,
there is a $v'\in S\left(X,\xi\right)$ such that $u_{n},v\sqsubseteq_{X,\xi}^{\sigma}v'$
by \prettyref{lem:One-step-confluence}. Then $u_{0}\sqsubseteq_{X,\xi}^{\sigma}v\sqsubseteq_{X,\xi}^{\sigma}v'$,
so $u_{0}\sqsubseteq_{X,\xi}^{\sigma}v'$.
\[
\xymatrix{ & u_{0}\ar[dr]\ar@{=}[rr] &  & u_{n-1}\ar[dl]\ar[dr]\\
 &  & v\ar[dr] &  & u_{n}\ar[dl]\\
 &  &  & v'
}
\]
\item $u_{n-1}\sqsupseteq_{X,\xi}^{\sigma}u_{n}$. Then $u_{n}\sqsubseteq_{X,\xi}^{\sigma}u_{n-1}\sqsubseteq_{X,\xi}^{\sigma}v$,
so $u_{n}\sqsubseteq_{X,\xi}^{\sigma}v$.
\[
\xymatrix{ &  &  & u_{n}\ar[dl]\\
u_{0}\ar@{=}[rr]\ar[dr] &  & u_{n-1}\ar[dl]\\
 & v
}
\]
\end{casenv}
\end{proof}
\begin{thm}
$d_{S\left(X,\xi\right)}\colon X\times X\to\set{0,1,2,\infty}$.
\end{thm}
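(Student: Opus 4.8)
The plan is to reduce everything to the Confluence \prettyref{lem:Confluence}, which has already done the real work. I would argue that a finite distance is automatically at most $2$, and then read off the admissible values from the two cases $s\equiv_{X,\xi}^{\sigma}t$ and $s\not\equiv_{X,\xi}^{\sigma}t$.

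First, recall that $d_{S\left(X,\xi\right)}$ is a metric, so $d_{S\left(X,\xi\right)}\left(s,t\right)=0$ holds precisely when $s=t$. Next, if $s\not\equiv_{X,\xi}^{\sigma}t$, then by \prettyref{lem:properties-of-dS}(2) we have $d_{S\left(X,\xi\right)}\left(s,t\right)=\infty$, and this case is finished.

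The heart of the argument is the remaining case $s\equiv_{X,\xi}^{\sigma}t$ with $s\neq t$. Here I would invoke \prettyref{lem:Confluence} to obtain a single $u\in S\left(X,\xi\right)$ with $s,t\sqsubseteq_{X,\xi}^{\sigma}u$. This exhibits the two-step chain $s\sqsubseteq_{X,\xi}^{\sigma}u\sqsupseteq_{X,\xi}^{\sigma}t$, so that $\left(s,t\right)\in\left(\sqsubseteq_{X,\xi}^{\sigma}\cup\sqsupseteq_{X,\xi}^{\sigma}\right)^{2}$ and hence $d_{S\left(X,\xi\right)}\left(s,t\right)\leq2$ by the definition of the metric. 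Combined with $d_{S\left(X,\xi\right)}\left(s,t\right)\geq1$ (since $s\neq t$), this forces $d_{S\left(X,\xi\right)}\left(s,t\right)\in\set{1,2}$.

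Putting the three cases together gives $d_{S\left(X,\xi\right)}\left(s,t\right)\in\set{0,1,2,\infty}$, as required. I do not anticipate any genuine obstacle: the statement is essentially a translation of confluence into the language of the metric, and the only nontrivial input, namely the existence of the common upper bound $u$, is supplied verbatim by \prettyref{lem:Confluence}. The sole point requiring a little care is to keep the metric axiom $d_{S\left(X,\xi\right)}\left(s,t\right)=0\iff s=t$ separate from the confluence step, so that the value $0$ is correctly attributed only to the diagonal.
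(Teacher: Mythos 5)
Your proposal is correct and follows the paper's own argument: the paper likewise reduces the theorem to \prettyref{lem:Confluence}, obtaining $u$ with $s,t\sqsubseteq_{X,\xi}^{\sigma}u$ and concluding $d_{S\left(X,\xi\right)}\left(s,t\right)\leq2$ whenever $s\equiv_{X,\xi}^{\sigma}t$. Your additional bookkeeping (the value $0$ on the diagonal via the metric axioms, and the value $\infty$ via \prettyref{lem:properties-of-dS}) is left implicit in the paper but is exactly the intended completion of the argument.
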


\begin{proof}
Let $s,t\in S\left(X,\xi\right)$ and suppose $s\equiv_{X,\xi}^{\sigma}t$.
There is a $u\in S\left(X,\xi\right)$ such that $s,t\sqsubseteq_{X,\xi}^{\sigma}u$
by \prettyref{lem:Confluence}. Hence $d_{S\left(X,\xi\right)}\left(s,t\right)\le2$.
\end{proof}
A similar argument in \prettyref{lem:Confluence} is often used in
the context of rewriting systems (such as lambda calculus). See also
\cite[Chapter 6]{BN98}.

\section{\label{sec:Alternative-definition-of-sigma}Alternative definition
of $\sigma$}

Our main theorem is the following. This gives an alternative definition
of $\sigma\left(X,\xi\right)$ in terms of the coarse structure of
$S\left(X,\xi\right)$.
\begin{thm}
\label{thm:sigma-is-Q-S}Let $\left(X,\xi\right)$ be a pointed coarse
space. Then $\left[s\right]_{X,\xi}^{\sigma}=\mathcal{Q}_{S\left(X,\xi\right)}\left(s\right)$
for all $s\in S\left(X,\xi\right)$. Hence $\sigma\left(X,\xi\right)=\mathcal{Q}\left(S\left(X,\xi\right)\right)$.
\end{thm}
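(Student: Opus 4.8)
The plan is to prove the pointwise identity $\left[s\right]_{X,\xi}^{\sigma}=\mathcal{Q}_{S\left(X,\xi\right)}\left(s\right)$ first, and then deduce the global statement $\sigma\left(X,\xi\right)=\mathcal{Q}\left(S\left(X,\xi\right)\right)$ simply by collecting these identities over all $s$ and comparing the two set descriptions. The crux is to understand the coarsely connected components of a \emph{metric} space, since by construction $S\left(X,\xi\right)$ carries the bounded coarse structure induced by $d_{S\left(X,\xi\right)}$.

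First I would record the general fact that, for any metric space $M$ (with the convention that $d_{M}$ may take the value $\infty$), the coarsely connected component of a point $x$ is exactly the set of points lying at finite distance from $x$:
\[
\mathcal{Q}_{M}\left(x\right)=\set{y\in M|d_{M}\left(x,y\right)<\infty}.
\]
For the inclusion $\subseteq$, any bounded $B\ni x$ has finite diameter, so every $y\in B$ satisfies $d_{M}\left(x,y\right)<\infty$. For the inclusion $\supseteq$, if $d_{M}\left(x,y\right)<\infty$ then the doubleton $\set{x,y}$ has finite diameter, hence is bounded and contains $x$, so $y\in\mathcal{Q}_{M}\left(x\right)$ by definition of the latter as a union of bounded sets through $x$. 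This is a routine unwinding of the definitions of the bounded coarse structure and of $\mathcal{Q}_{M}$.

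Specialising to $M=S\left(X,\xi\right)$ and invoking \prettyref{lem:properties-of-dS}, which identifies $d_{S\left(X,\xi\right)}\left(s,t\right)<\infty$ (equivalently $d_{S\left(X,\xi\right)}\left(s,t\right)\in\mathbb{N}$) with $s\equiv_{X,\xi}^{\sigma}t$, I would then read off
\[
\mathcal{Q}_{S\left(X,\xi\right)}\left(s\right)=\set{t|d_{S\left(X,\xi\right)}\left(s,t\right)<\infty}=\set{t|s\equiv_{X,\xi}^{\sigma}t}=\left[s\right]_{X,\xi}^{\sigma}.
\]
Finally, passing from points to the families of their classes and recalling that $\sigma\left(X,\xi\right):=\set{\left[s\right]_{X,\xi}^{\sigma}|s\in S\left(X,\xi\right)}$ while $\mathcal{Q}\left(S\left(X,\xi\right)\right):=\set{\mathcal{Q}_{S\left(X,\xi\right)}\left(s\right)|s\in S\left(X,\xi\right)}$, the pointwise identity gives
\[
\sigma\left(X,\xi\right)=\set{\left[s\right]_{X,\xi}^{\sigma}|s\in S\left(X,\xi\right)}=\set{\mathcal{Q}_{S\left(X,\xi\right)}\left(s\right)|s\in S\left(X,\xi\right)}=\mathcal{Q}\left(S\left(X,\xi\right)\right),
\]
which is the assertion.

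I do not anticipate a serious obstacle: once \prettyref{lem:properties-of-dS} is available, the theorem is essentially a repackaging, and the only genuine content is the metric-space computation of $\mathcal{Q}_{M}\left(x\right)$ in the second paragraph. The one point requiring a little care is that $\mathcal{Q}\left(S\left(X,\xi\right)\right)$ is defined as a set of \emph{components} rather than as a quotient set, so I would make explicit that the two collections coincide as sets — but this is immediate from the pointwise identity $\left[s\right]_{X,\xi}^{\sigma}=\mathcal{Q}_{S\left(X,\xi\right)}\left(s\right)$ just established.
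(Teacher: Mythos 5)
Your proposal is correct and takes essentially the same approach as the paper: both reduce the theorem to \prettyref{lem:properties-of-dS}, using the fact that in the bounded coarse structure induced by $d_{S\left(X,\xi\right)}$ the coarsely connected component of $s$ is exactly the set of coarse sequences at finite distance from $s$. The only difference is presentational — you isolate this finite-distance description of $\mathcal{Q}_{M}\left(x\right)$ as a standalone fact about metric spaces and argue directly from the union-of-bounded-sets definition, whereas the paper invokes the maximality of $\mathcal{Q}_{S\left(X,\xi\right)}\left(s\right)$ among coarsely connected subsets containing $s$; the mathematical content is identical.
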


\begin{proof}
Let $s\in S\left(X,\xi\right)$. Then, by \prettyref{lem:properties-of-dS}-(1),
$\left[s\right]_{X,\xi}^{\sigma}$ is coarsely connected (in fact,
$1$-chain-connected) as a subset of $S\left(X,\xi\right)$, and contains
$s$. Hence $\left[s\right]_{X,\xi}^{\sigma}\subseteq\mathcal{Q}_{S\left(X,\xi\right)}\left(s\right)$
by the maximality of $\mathcal{Q}_{S\left(X,\xi\right)}\left(s\right)$.
Conversely, let $t\in\mathcal{Q}_{S\left(X,\xi\right)}\left(s\right)$.
By \prettyref{lem:properties-of-dS}-(2), $s\equiv_{X,\xi}^{\sigma}t$
must hold, and therefore $t\in\left[s\right]_{X,\xi}^{\sigma}$. Hence
$\mathcal{Q}_{S\left(X,\xi\right)}\left(s\right)\subseteq\left[s\right]_{X,\xi}^{\sigma}$.
\end{proof}
This yields quite simple and systematic proofs of some existing results
on $\sigma\left(X,\xi\right)$.
\begin{thm}
\label{thm:functoriality-of-sigma}Each coarse map $f\colon\left(X,\xi\right)\to\left(Y,\eta\right)$
functorially induces a map $\sigma\left(f\right)\colon\sigma\left(X,\xi\right)\to\sigma\left(Y,\eta\right)$
by $\sigma\left(f\right)\left(\left[s\right]_{X,\xi}^{\sigma}\right):=\left[f\circ s\right]_{Y,\eta}^{\sigma}$.
\end{thm}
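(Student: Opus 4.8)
The plan is to obtain this theorem for free from the factorisation $\sigma=\mathcal{Q}\circ S$ recorded in \prettyref{thm:sigma-is-Q-S}, so that no genuinely new verification is needed. First I would invoke the functoriality of $S$ (\prettyref{thm:functoriality-of-S}): the coarse map $f\colon(X,\xi)\to(Y,\eta)$ induces a bornologous map $S(f)\colon S(X,\xi)\to S(Y,\eta)$ with $S(f)(s)=f\circ s$. Because $S(f)$ is bornologous, the coarsely connected component functor $\mathcal{Q}$ applies (\prettyref{thm:functoriality-of-Q}) and produces a map $\mathcal{Q}(S(f))\colon\mathcal{Q}(S(X,\xi))\to\mathcal{Q}(S(Y,\eta))$ characterised by $\mathcal{Q}(S(f))(\mathcal{Q}_{S(X,\xi)}(s))=\mathcal{Q}_{S(Y,\eta)}(S(f)(s))=\mathcal{Q}_{S(Y,\eta)}(f\circ s)$.

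Next I would translate this along the identifications of \prettyref{thm:sigma-is-Q-S}. That theorem gives $\mathcal{Q}(S(X,\xi))=\sigma(X,\xi)$ with $\mathcal{Q}_{S(X,\xi)}(s)=[s]_{X,\xi}^{\sigma}$, and likewise for $(Y,\eta)$. Under these equalities the map $\mathcal{Q}(S(f))$ is precisely the assignment $[s]_{X,\xi}^{\sigma}\mapsto[f\circ s]_{Y,\eta}^{\sigma}$, which is the formula defining $\sigma(f)$. Hence I would simply set $\sigma(f):=\mathcal{Q}(S(f))$; its well-definedness is inherited from that of $\mathcal{Q}(S(f))$, and therefore ultimately from the well-definedness clauses already proved in \prettyref{thm:functoriality-of-Q} and \prettyref{thm:functoriality-of-S}. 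Functoriality is then automatic: a composite of functors is a functor, so $\sigma=\mathcal{Q}\circ S$ preserves identities and composition, yielding $\sigma(\id)=\id$ and $\sigma(g\circ f)=\sigma(g)\circ\sigma(f)$.

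I do not expect a substantive obstacle here, since all of the content has been front-loaded into \prettyref{thm:sigma-is-Q-S}; this is exactly the ``trivialisation'' advertised in the introduction. The one point that warrants a line of care is confirming that $\mathcal{Q}(S(f))$ really carries the class $[s]_{X,\xi}^{\sigma}$ to $[f\circ s]_{Y,\eta}^{\sigma}$ — that is, that the abstract map between the correct sets is the intended one on representatives. This reduces to the identity $S(f)(s)=f\circ s$ from the definition of $S(f)$, which is immediate. For contrast, a direct argument would instead have to check by hand that $s\equiv_{X,\xi}^{\sigma}t$ implies $f\circ s\equiv_{Y,\eta}^{\sigma}f\circ t$ (using that $s=t\circ\kappa$ gives $f\circ s=(f\circ t)\circ\kappa$, so subsequences are preserved, and then passing to the equivalence closure); the present approach is designed precisely to bypass this.
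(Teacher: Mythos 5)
Your proposal is correct and follows exactly the paper's route: the paper proves this theorem as ``immediate from'' \prettyref{thm:functoriality-of-Q}, \prettyref{thm:functoriality-of-S} and \prettyref{thm:sigma-is-Q-S}, i.e.\ by setting $\sigma\left(f\right):=\mathcal{Q}\left(S\left(f\right)\right)$ just as you do. Your write-up merely makes explicit the identification step (that $\mathcal{Q}\left(S\left(f\right)\right)$ sends $\left[s\right]_{X,\xi}^{\sigma}$ to $\left[f\circ s\right]_{Y,\eta}^{\sigma}$) which the paper leaves implicit.
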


\begin{proof}
Immediate from \prettyref{thm:functoriality-of-Q}, \prettyref{thm:functoriality-of-S}
and \prettyref{thm:sigma-is-Q-S}.
\end{proof}
\begin{cor}[{Miller, Stibich and Moore \cite[Theorem 10]{MSM10}}]
If pointed coarse spaces $\left(X,\xi\right)$ and $\left(Y,\eta\right)$
are asymorphic, then $\sigma\left(X,\xi\right)\cong\sigma\left(Y,\eta\right)$.
\end{cor}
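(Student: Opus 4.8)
The plan is to exploit the functoriality of $\sigma$ established in \prettyref{thm:functoriality-of-sigma}, together with the elementary categorical principle that any functor carries isomorphisms to isomorphisms. Concretely, suppose $f\colon X\to Y$ is an asymorphism with $f\left(\xi\right)=\eta$, and let $g\colon Y\to X$ denote its inverse, which is again an asymorphism and satisfies $g\left(\eta\right)=\xi$. First I would observe that $f$ and $g$ are in fact coarse maps, so that \prettyref{thm:functoriality-of-sigma} applies to both and produces maps $\sigma\left(f\right)\colon\sigma\left(X,\xi\right)\to\sigma\left(Y,\eta\right)$ and $\sigma\left(g\right)\colon\sigma\left(Y,\eta\right)\to\sigma\left(X,\xi\right)$.

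The one point that needs checking is that an asymorphism is a coarse map, i.e.\ that it is proper in addition to being bornologous. This is immediate: since $g=f^{-1}$ is bornologous it preserves boundedness, so for every $B\in\mathcal{B}_{Y}$ we have $f^{-1}\left(B\right)=g\left(B\right)\in\mathcal{B}_{X}$, which is exactly the properness of $f$; the same argument applied to $g$ shows that $g$ is proper. Hence both $f$ and $g$ are coarse maps between the pointed coarse spaces in question.

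It then remains to verify that $\sigma\left(f\right)$ and $\sigma\left(g\right)$ are mutually inverse. This follows from the functoriality identities $\sigma\left(g\circ f\right)=\sigma\left(g\right)\circ\sigma\left(f\right)$ and $\sigma\left(\id_{X}\right)=\id_{\sigma\left(X,\xi\right)}$, both of which are read off directly from the defining formula $\sigma\left(f\right)\left(\left[s\right]_{X,\xi}^{\sigma}\right)=\left[f\circ s\right]_{Y,\eta}^{\sigma}$. Indeed, $g\circ f=\id_{X}$ and $f\circ g=\id_{Y}$ then yield $\sigma\left(g\right)\circ\sigma\left(f\right)=\id_{\sigma\left(X,\xi\right)}$ and $\sigma\left(f\right)\circ\sigma\left(g\right)=\id_{\sigma\left(Y,\eta\right)}$, so $\sigma\left(f\right)$ is a bijection and therefore $\sigma\left(X,\xi\right)\cong\sigma\left(Y,\eta\right)$. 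I do not anticipate any genuine obstacle here; the only mild subtlety is the properness check above, which is precisely the step where the bornologousness of the inverse map is needed.
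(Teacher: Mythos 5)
Your proposal is correct and takes essentially the same route as the paper: the paper's proof is the one-line observation that every functor preserves isomorphisms, which is exactly the categorical argument you unwind explicitly via $\sigma\left(g\right)\circ\sigma\left(f\right)=\sigma\left(g\circ f\right)=\sigma\left(\id_{X}\right)=\id_{\sigma\left(X,\xi\right)}$ and its mirror image. Your extra verification that an asymorphism is proper (hence a genuine morphism of $\mathsf{Coarse}_{\ast}$, using that the bornologous inverse preserves boundedness) is a detail the paper leaves implicit, and it is a worthwhile one to record.
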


\begin{proof}
Obvious from the fact that every functor preserves isomorphisms.
\end{proof}
\begin{thm}
\label{thm:bornotopy-invariance-of-sigma}If coarse maps $f,g\colon\left(X,\xi\right)\to\left(Y,\eta\right)$
are bornotopic, then $\sigma\left(f\right)=\sigma\left(g\right)$.
\end{thm}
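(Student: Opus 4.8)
The plan is to read the commuting diagram of the introduction at the level of morphisms and thereby reduce the bornotopy invariance of $\sigma$ to the invariance properties of its two factors $S$ and $\mathcal{Q}$, both of which have already been established. In outline: push the hypothesis through $S$ using the preservation of bornotopy, then push the result through $\mathcal{Q}$ using its coarse invariance, and finally reinterpret $\mathcal{Q} \circ S$ as $\sigma$ via \prettyref{thm:sigma-is-Q-S}.

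First I would record that the identification $\sigma\left(f\right) = \mathcal{Q}\left(S\left(f\right)\right)$ holds not merely on objects but on morphisms. Note that $S\left(f\right)$ is bornologous by \prettyref{thm:functoriality-of-S}, so $\mathcal{Q}\left(S\left(f\right)\right)$ is defined by \prettyref{thm:functoriality-of-Q}. For $s \in S\left(X,\xi\right)$ we have, by definition, $S\left(f\right)\left(s\right) = f \circ s$, and by \prettyref{thm:sigma-is-Q-S} the equalities $[s]_{X,\xi}^{\sigma} = \mathcal{Q}_{S\left(X,\xi\right)}\left(s\right)$ and $[f \circ s]_{Y,\eta}^{\sigma} = \mathcal{Q}_{S\left(Y,\eta\right)}\left(f \circ s\right)$. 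Hence $\sigma\left(f\right)\left([s]_{X,\xi}^{\sigma}\right) = [f \circ s]_{Y,\eta}^{\sigma} = \mathcal{Q}_{S\left(Y,\eta\right)}\left(S\left(f\right)\left(s\right)\right) = \mathcal{Q}\left(S\left(f\right)\right)\left(\mathcal{Q}_{S\left(X,\xi\right)}\left(s\right)\right)$, so $\sigma\left(f\right) = \mathcal{Q}\left(S\left(f\right)\right)$ as maps; the same computation gives $\sigma\left(g\right) = \mathcal{Q}\left(S\left(g\right)\right)$.

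It then remains to feed the hypothesis through the two functors in turn. Since $f$ and $g$ are bornotopic coarse maps, \prettyref{thm:preservation-of-coarse-equivalence-of-S} yields that $S\left(f\right)$ and $S\left(g\right)$ are bornotopic (bornologous) maps $S\left(X,\xi\right) \to S\left(Y,\eta\right)$. Applying the coarse invariance of $\mathcal{Q}$ (\prettyref{thm:coarse-invariance-of-Q}) to this bornotopic pair gives $\mathcal{Q}\left(S\left(f\right)\right) = \mathcal{Q}\left(S\left(g\right)\right)$. Combining this with the morphism-level identities of the previous paragraph produces $\sigma\left(f\right) = \mathcal{Q}\left(S\left(f\right)\right) = \mathcal{Q}\left(S\left(g\right)\right) = \sigma\left(g\right)$, which is the assertion.

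I expect essentially no genuine obstacle here: once the factorisation is read at the morphism level, the statement is a one-line diagram chase, and this is precisely the sense in which the reformulation \emph{trivialises} the bornotopy invariance. The only point deserving a moment's care is the compatibility of the two descriptions of the induced map established in the second paragraph, namely that the object-level equality $\sigma\left(X,\xi\right) = \mathcal{Q}\left(S\left(X,\xi\right)\right)$ is witnessed by $[s]_{X,\xi}^{\sigma} = \mathcal{Q}_{S\left(X,\xi\right)}\left(s\right)$ \emph{for the same representative} $s$, so that $\sigma\left(f\right)$ and $\mathcal{Q}\left(S\left(f\right)\right)$ literally coincide as functions rather than merely agreeing up to the identification; this is immediate from the explicit formulas, but it is the hinge on which the whole argument turns.
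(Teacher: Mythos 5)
Your proposal is correct and follows exactly the paper's own route: the paper proves this theorem as ``immediate from'' \prettyref{thm:preservation-of-coarse-equivalence-of-S}, \prettyref{thm:coarse-invariance-of-Q} and \prettyref{thm:sigma-is-Q-S}, which are precisely the three ingredients you invoke in the same order. Your second paragraph merely makes explicit the morphism-level identity $\sigma\left(f\right)=\mathcal{Q}\left(S\left(f\right)\right)$ that the paper leaves implicit, so your write-up is a faithful (and slightly more careful) expansion of the intended argument.
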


\begin{proof}
Immediate from \prettyref{thm:preservation-of-coarse-equivalence-of-S},
\prettyref{thm:coarse-invariance-of-Q} and \prettyref{thm:sigma-is-Q-S}.
\end{proof}
\begin{cor}[{DeLyser, LaBuz and Wetsell \cite[Theorem 4]{DLW11}}]
If pointed coarse spaces $\left(X,\xi\right)$ and $\left(Y,\eta\right)$
are coarsely equivalent, then $\sigma\left(X,\xi\right)\cong\sigma\left(Y,\eta\right)$.
\end{cor}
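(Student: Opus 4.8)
The plan is to exploit that $\sigma$ is a functor which, by \prettyref{thm:bornotopy-invariance-of-sigma}, assigns the same map to any two bornotopic coarse maps; a functor with this property automatically sends coarse equivalences to isomorphisms. This is the exact analogue of the preceding corollary, with bornotopy invariance now playing the role that bare functoriality played there.

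Explicitly, I would fix a base-point preserving coarse equivalence $f\colon\left(X,\xi\right)\to\left(Y,\eta\right)$ together with a coarse inverse $g\colon\left(Y,\eta\right)\to\left(X,\xi\right)$, so that $g\circ f$ is bornotopic to $\id_{X}$ and $f\circ g$ is bornotopic to $\id_{Y}$. Then \prettyref{thm:functoriality-of-sigma} gives $\sigma\left(g\right)\circ\sigma\left(f\right)=\sigma\left(g\circ f\right)$ and $\sigma\left(f\right)\circ\sigma\left(g\right)=\sigma\left(f\circ g\right)$, while \prettyref{thm:bornotopy-invariance-of-sigma} together with $\sigma\left(\id\right)=\id$ turns the right-hand sides into $\id_{\sigma\left(X,\xi\right)}$ and $\id_{\sigma\left(Y,\eta\right)}$ respectively. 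Hence $\sigma\left(f\right)$ is a bijection with inverse $\sigma\left(g\right)$, giving the desired isomorphism $\sigma\left(X,\xi\right)\cong\sigma\left(Y,\eta\right)$.

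The one step I expect to require genuine care is that the coarse inverse $g$ provided by the definition is only assumed bornologous, and in particular need not preserve the base point, whereas \prettyref{thm:functoriality-of-sigma} applies only to base-point preserving coarse maps. I would handle this in two moves. First, $g$ is automatically proper: for bounded $B\subseteq X$, bornologousness of $f$ and closeness of $f\circ g$ to $\id_{Y}$ together force $g^{-1}\left(B\right)$ to be bounded, so $g$ is in fact a coarse map. Second, since $g\circ f$ is close to $\id_{X}$, the point $g\left(\eta\right)=g\left(f\left(\xi\right)\right)$ is close to $\xi$, so $\set{\xi,g\left(\eta\right)}$ is bounded and $\mathcal{Q}_{X}\left(g\left(\eta\right)\right)=\mathcal{Q}_{X}\left(\xi\right)$. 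Replacing $g$ by the map $g'$ that agrees with $g$ off $\eta$ and sends $\eta$ to $\xi$ modifies $g$ at a single point and only by a bounded amount; the bounded case analysis underlying \prettyref{thm:changing-the-base-point} then shows that $g'$ is a coarse map, is bornotopic to $g$, and is base-point preserving. As bornotopy is compatible with composition, $g'$ is still a coarse inverse of $f$ in $\mathsf{Coarse}_{\ast}$, so the functorial computation above goes through verbatim with $g'$ in place of $g$.
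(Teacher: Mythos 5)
Your proposal is correct and follows essentially the same route as the paper: both proofs consist of the single categorical observation that, by \prettyref{thm:functoriality-of-sigma} and \prettyref{thm:bornotopy-invariance-of-sigma}, $\sigma\left(f\right)$ and $\sigma\left(g\right)$ compose to the identities on both sides and are therefore mutually inverse bijections. Your final paragraph goes beyond the paper's own proof, which silently treats the coarse inverse $g$ as a morphism of $\mathsf{Coarse}_{\ast}$ even though the definition only makes $g$ bornologous and unpointed; your repair --- properness of $g$ is automatic from the closeness of $f\circ g$ to $\id_{Y}$, and redefining $g\left(\eta\right):=\xi$ alters $g$ only by a bornotopy while keeping it coarse, since $\set{\xi,g\left(\eta\right)}$ is bounded --- is correct and fills a genuine gap rather than deviating from the paper's argument.
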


\begin{proof}
Let $f\colon\left(X,\xi\right)\to\left(Y,\eta\right)$ be a coarse
equivalence with a coarse inverse $g\colon\left(Y,\eta\right)\to\left(X,\xi\right)$.
Then $f\circ g$ and $g\circ f$ are bornotopic to $\id_{\left(Y,\eta\right)}$
and $\id_{\left(X,\xi\right)}$, respectively. By \prettyref{thm:functoriality-of-sigma}
and \prettyref{thm:bornotopy-invariance-of-sigma},
\begin{align*}
\id_{\sigma\left(Y,\eta\right)} & =\sigma\left(\id_{\left(Y,\eta\right)}\right)\\
 & =\sigma\left(f\circ g\right)\\
 & =\sigma\left(f\right)\circ\sigma\left(g\right),\\
\id_{\sigma\left(X,\xi\right)} & =\sigma\left(\id_{\left(X,\xi\right)}\right)\\
 & =\sigma\left(g\circ f\right)\\
 & =\sigma\left(g\right)\circ\sigma\left(f\right),
\end{align*}
so $\sigma\left(f\right)$ and $\sigma\left(g\right)$ are inverse
to each other. Hence $\sigma\left(X,\xi\right)\cong\sigma\left(Y,\eta\right)$.
\end{proof}
\begin{cor}[{DeLyser, LaBuz and Wetsell \cite[Proposition 3]{DLW11}}]
Let $X$ be a coarse space, and $\xi_{1},\xi_{2}\in X$. If $\mathcal{Q}_{X}\left(\xi_{1}\right)=\mathcal{Q}_{X}\left(\xi_{2}\right)$,
then $\sigma\left(X,\xi_{1}\right)$ and $\sigma\left(X,\xi_{2}\right)$
are equipotent (i.e. have the same cardinality).
\end{cor}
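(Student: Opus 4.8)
The plan is to assemble the corollary from the functor $\mathcal{Q}$ and the isometry furnished by \prettyref{thm:changing-the-base-point}, exploiting the categorical principle that a functor preserves isomorphisms. Since $\mathcal{Q}_X(\xi_1)=\mathcal{Q}_X(\xi_2)$, \prettyref{thm:changing-the-base-point} supplies mutually inverse isometries $T_{21}\colon S(X,\xi_1)\to S(X,\xi_2)$ and $T_{12}\colon S(X,\xi_2)\to S(X,\xi_1)$. The first step is to record that an isometry between metric spaces is an asymorphism for the induced bounded coarse structures: an isometric bijection $T$ satisfies $\sup d\bigl((T\times T)(E)\bigr)=\sup d(E)$ for every relation $E$, so $T$ and $T^{-1}$ each carry relations of finite diameter to relations of finite diameter and are therefore bornologous. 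Hence $T_{21}$ is an isomorphism in $\mathsf{Coarse}_b$.

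Next I would push this isomorphism through $\mathcal{Q}$. By \prettyref{thm:functoriality-of-Q} the maps $\mathcal{Q}(T_{21})$ and $\mathcal{Q}(T_{12})$ exist, and functoriality gives $\mathcal{Q}(T_{12})\circ\mathcal{Q}(T_{21})=\mathcal{Q}(T_{12}\circ T_{21})=\mathcal{Q}(\id)=\id$, together with the symmetric identity for the other composite. Thus $\mathcal{Q}(T_{21})$ is a bijection from $\mathcal{Q}(S(X,\xi_1))$ onto $\mathcal{Q}(S(X,\xi_2))$. Finally, \prettyref{thm:sigma-is-Q-S} identifies $\sigma(X,\xi_i)$ with $\mathcal{Q}(S(X,\xi_i))$ for $i=1,2$, so $\mathcal{Q}(T_{21})$ is precisely a bijection $\sigma(X,\xi_1)\to\sigma(X,\xi_2)$, which gives the asserted equipotence.

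I do not expect a genuine obstacle, which is exactly the advantage of the present reformulation: once $\sigma$ is realized as $\mathcal{Q}\circ S$, base-point independence becomes a formal consequence of functoriality. The only step demanding the slightest attention is the observation that an isometry is a coarse isomorphism, and even that reduces to the fact that the bounded coarse structure records only which relations have finite diameter---a property isometries preserve verbatim.
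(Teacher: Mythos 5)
Your proposal is correct and follows essentially the same route as the paper: invoke \prettyref{thm:changing-the-base-point} to obtain the isometry between $S(X,\xi_1)$ and $S(X,\xi_2)$, observe that an isometry is an asymorphism for the bounded coarse structures, and then apply the functor $\mathcal{Q}$ together with the identification $\sigma=\mathcal{Q}\circ S$ from \prettyref{thm:sigma-is-Q-S}. The paper states this in one line, while you usefully spell out the two points it leaves implicit (that isometries are asymorphisms, and that functoriality turns the mutually inverse maps $T_{21},T_{12}$ into mutually inverse bijections on components), but the argument is the same.
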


\begin{proof}
By \prettyref{thm:changing-the-base-point}, $S\left(X,\xi_{1}\right)$
and $S\left(X,\xi_{2}\right)$ are isometric and hence asymorphic.
So $\sigma\left(X,\xi_{1}\right)=\mathcal{Q}\left(S\left(X,\xi_{1}\right)\right)\cong\mathcal{Q}\left(S\left(X,\xi_{2}\right)\right)=\sigma\left(X,\xi_{2}\right)$
by \prettyref{thm:functoriality-of-Q} and \prettyref{thm:sigma-is-Q-S}.
\end{proof}

\subsection*{Acknowledgement}

The author is grateful to the anonymous referee for valuable comments
which improved the quality of the manuscript. The referee pointed
out that the metric function $d_{S\left(X,\xi\right)}$ only takes
the values $0$, $1$, $2$ and $\infty$ by \cite[Proposition 3.2]{DLT13}.

\bibliographystyle{amsplain}
\bibliography{Another_view}

\end{document}